\documentclass[11pt,leqno]{article}
\usepackage{amssymb}
\usepackage[reqno]{amsmath}
\usepackage{amsthm}
\usepackage{amsfonts}
\usepackage{comment}
\usepackage{graphicx}
\usepackage{xcolor}
\usepackage{mathtools}
\mathtoolsset{showonlyrefs}
\usepackage{csquotes}
\usepackage{hyperref}
\usepackage{enumitem}
\usepackage{setspace} \doublespacing

\usepackage{color}
\definecolor{gr}{rgb}{0,0.50, 0.50}
\definecolor{mg}{rgb}{0.85,0,0.85}

\usepackage{pgfplots}
	\usetikzlibrary{pgfplots.fillbetween,arrows}
	\tikzset{dimens1/.style={->,>=stealth}}
	\definecolor{blue1}{HTML}{f4eb0a}
	\definecolor{blue1a}{HTML}{2bc4b0}
	
\newcommand\blfootnote[1]{%
  \begingroup
  \renewcommand\thefootnote{}\footnote{#1}%
  \addtocounter{footnote}{-1}%
  \endgroup
}

\newcommand{\bk}{\color{black}}

 \oddsidemargin -.1in \textwidth 6in
\topmargin -0.5 in 
\textheight 22cm

\setlength{\columnsep}{1.5cm }

\def\R{\mathbb{R}}

\def\ds{\displaystyle}
 
\def\l{\ell} 
\def\e{\varepsilon}

\def\gae{\Gamma^\varepsilon}

\def\ep{\varepsilon}

\def\dive{\textnormal{div}}

\def\rw{\rightarrow}
\def\ru{\rightharpoonup}
\def\G{\Gamma}
\def\n{\nabla}
\def\g{\gamma}

\newtheorem{theorem}{Theorem}[section]
\newtheorem{corollary}[theorem]{Corollary}
\newtheorem{lemma}[theorem]{Lemma}
\newtheorem{proposition}[theorem]{Proposition}
 \theoremstyle{definition}

\newtheorem{remark}[theorem]{Remark}
\numberwithin{equation}{section}

	\title{ Homogenization of a nonlinear elliptic problem\\ with imperfect rough interface} 
	
	\author{Sara Monsurr\`{o}$^{*\dagger}$, Carmen Perugia$^{\ddagger,\dagger}$ and Federica Raimondi$^{*\dagger}$}
	
	\date{}

\setlength{\parindent}{1em}
\begin{document}

	\maketitle
	\vskip -8mm
	{\scriptsize{* Universit\`a degli Studi di Salerno, Dipartimento di Matematica, Via Giovanni Paolo II 132, 84084 Fisciano (SA), Italy.\hskip 2mm Email: smonsurro@unisa.it,$\;$ fraimondi@unisa.it\\
\indent $\ddagger$ Universit\`a degli Studi del Sannio, Dipartimento di Scienze e Tecnologie, Via dei Mulini 74, 82100 Benevento, Italy.\hskip 2mm Email: cperugia@unisannio.it
\\ $\dagger$ Member of the Gruppo Nazionale per l'Analisi Matematica, la Probabilit\`a e le loro
Applicazioni (GNAMPA) of the Istituto Nazionale di Alta Matematica (INdAM).}}

	\begin{abstract}

This paper deals with an elliptic problem with a nonlinear lower order term set in an open bounded cylinder of $\R^N$, $N\geq 2$, divided into two connected components by an imperfect rough interface. More precisely, we assume that at the interface the flux is continuous and proportional, via a nonlinear rule, to the jump of the solution.

According to the amplitude of the interface oscillations and the proportionality coefficient, we derive different homogenized problems.

\blfootnote{\textbf{Keywords: }{Homogenization, nonlinear elliptic problems, rough interface}\vskip 0.3mm}
\blfootnote{\textbf{MSC:}  35B27, 35J60, 35J66, 35Q92}
	\end{abstract}
	
	\section{Introduction }
	
	\label{intro}

This paper concerns a nonlinear elliptic problem with a lower order term defined in an open bounded cylinder $Q=\omega \times ]-\l,\l[$ of $\R^N$,  with $N\geq 2$,   $\l>0$ and $\omega$ bounded smooth  domain of $\R^{N-1}$. The cylinder is divided into two connected components $Q^+_\ep$ and $Q^-_\ep$ by a rough surface $\gae$, $\e$ being a positive parameter tending to zero. This common interface is represented by the graph of a quickly oscillating function $\e$-periodic in the first $N-1$ variables, having oscillations' amplitude of order $\e^k$, with $k> 0$. This means  that, as $\ep$ goes to zero, the interface turns into a flat surface. 

Given $f\in L^2(Q)$, we consider the following problem:
	\begin{equation} \label{prob}
	\left\{\begin{array}{llll}
	\ds -\dive(D^\e\nabla u_\e)+h_1( u_\e)=f& \textnormal{in} \; Q\setminus \gae,\\[2mm]
	\ds (D^\e\nabla u_\e)^+ \cdot \nu_\e = (D^\e\nabla u_\e)^- \cdot \nu_\e & \textnormal{on} \; \gae,\\[2mm]
	\ds (D^\e\nabla u_\e)^+\cdot \nu_\e = -\e^{\gamma} h_2([u_\e])& \textnormal{on} \; \gae,\\[2mm]
	\ds u_\e = 0& \textnormal{on} \; \partial Q,
	\end{array}\right.
	\end{equation}			
where $\nu_\ep$ is the unit outward normal to $Q^+_\ep$ and $[\cdot]$ is the jump of the solution across $\gae$. 
We assume that the matrix $D^\e$ is $\ep$-periodic, uniformly elliptic and bounded, and the nonlinear functions $h_1$ and $h_2$ satisfy some natural regularity and growth conditions. For more details about the setting of the problem
 we refer the reader to Section \ref{secsetting}.
 
 We are interested in studying  the asymptotic behaviour, as $\ep$ tends to zero, of the solutions of problem \eqref{prob}. The values of the parameter $\gamma\in\R$ in the jump condition and the parameter $k$ in the amplitude of the oscillations play a crucial role and give rise to three different limit cases denoted by A), B) and C). In all of them, we obtain a homogenized  elliptic equation with a nonlinear lower order term.

In cases A) and B), the limit domain is a cylinder made up of two components $Q^+$ and $Q^-$ separated by a flat interface $\Gamma_0$. In particular, in case A), we obtain an effective imperfect transmission problem where the
flux is proportional, by means of a nonlinear function, to the jump of the solution via different constants whose values depend on $k$ and $\gamma$. In case B), one obtains a homogenized problem equivalent to two independent problems set in  $Q^+$ and $Q^-$, with homogeneous  Neumann boundary conditions on $\Gamma_0$.

\noindent Finally, in case C), we find a homogenized  Dirichlet elliptic problem in the whole cylinder $Q$.

In \cite{donpiat}, the authors study, in the same domain $Q$, the homogenization of the stationary heat diffusion with a discontinuity of the temperature field  through the interface modeled by a jump of the solution linearly proportional to the heat flux (see also \cite{AmAnT}, \cite{AmAnT2}, \cite{CJ}, \cite{FMPhomo}, \cite{FMP}, \cite{MP1}, \cite{Rai},  and the references therein for physical justifications of the jump condition). The homogenization of a related eigenvalue problem has been considered in \cite{AMR}. In \cite{DonJOn}, it is analyzed the  parabolic version of \cite{donpiat}. The homogenization for the elliptic problem of \cite{donpiat} with an additional singular lower order term is treated in \cite{DoGia}. Nonlinear lower order and interfacial terms, as in our case, appear in \cite{donngu} where, by using the unfolding operator, it is investigated the effective thermal conduction in a two component composite with inclusions.

 On the other hand, as evidenced in \cite{AmAnG}, nonlinear interface conditions also appear in the modeling of the electrical conduction in biological tissues, as a consequence of an interfacial resistance property of the cell membranes. Moreover, nonlinear reaction and jump terms, as in \eqref{prob}, occur very often when studying some metabolic processes arising in biological cells, in which biochemical species can diffuse and react either in the cytosol or in the various organelles which are present in the cytoplasm. These species can even react on the organelles' membrane which is not perfectly smooth but it contains some irregularities which play an important role mainly in problems related to enzymatic kinetic.  A precise mathematical model for such processes in the non-stationary case and in a two component composite with inclusions is analyzed in \cite{Gahn} and \cite{Gahn2}.
In \cite{Conca2003},  the authors study some nonlinear models involving adsorption and reactions
arising in porous media.  The homogenization of nonlinear problems modelling the chemical reactive flows through the exterior of a domain containing periodically distributed reactive solid grains is considered in \cite{Conca2004}. 
In order to model some reaction-diffusion processes in porous media, in \cite {CPT}, it has been analyzed the macroscopic behavior of the solution of a parabolic and coupled system of three partial differential equations depending on a small positive parameter $\varepsilon$ related to  the size and periodicity of the structure. For reaction-diffusion equations we can refer, for instance, to \cite{Gahn}$\div$ \cite{Pop1}, \cite{Muntean1} $\div$ \cite{Neu-Jag} and to the papers quoted therein.

Our work is organized as follows.

 In Section \ref{secsetting} we describe the two component  composite cylinder with rough interface, introduce the problem together with its functional setting and state the main result. 

Section \ref{seccomp} is devoted to some technical tools related to the geometry of the domain. At first we adapt to our context certain estimates contained in \cite{CeFrPiat}. Then, we  recall some compactness results of \cite{donpiat}.  Successively, we generalize and unify  some  convergence results proved in \cite{donpiat}. This argument allows us to develop the homogenization process without the introduction of the different extension operators needed in \cite{donpiat}. Finally, we show an useful convergence result related only to the geometry of the interface and to the parameter $\g$.

The existence and uniqueness of a weak solution of problem \eqref{prob} and some a priori estimates are proved in Section \ref{secexuniq}. 

At last, in Section  \ref{sechomo}, we carry out the homogenization process. 
	
		\section{Setting of the problem and main result}\label{secsetting}
		
		Throughout the paper, we use the notation given in \cite{donpiat} to specify the structure of $Q$ and to define the functional spaces involved. 
		\\
Let $N\geq 2$ and 
		\begin{itemize}
		\item[$\bullet$] $Y=]0,1[^N$ be the volume reference cell,
		\item[$\bullet$] $Y'=]0,1[^{N-1}$ be the surface reference cell,
		\item[$\bullet$] $\mathcal{M}_{E}(v)$ be the average on $E$ of any function $v \in L^1(E)$,  
		\item[$\bullet$] $\chi\strut_{E}$ be the characteristic function of any open set $E \subseteq \R^N$.
		\end{itemize}

		Our domain is characterized by a rough interface described by means of the graph of a function $g$ satisfying the following assumption:
		\medskip
		
		\textbf{A$_g$}) \quad The function $g:Y'\rw \mathbb{R}$  is positive, periodic,  Lipschitz continuous.
		\vskip 2mm
		
		We set 
		\begin{equation}\label{gbar}
		\ds \bar{g}=\max g.
		\end{equation}
	
Given  a positive parameter  $\e$ converging to zero, $k>0$,  and set $x'=(x_1, ..., x_{N-1})$,  the graph
   $$    \gae=\left\{x \in Q\ |\ x_N=\e^k g\left(\frac{x'}{\e}\right)\right\}$$
    represents an oscillating (rough) interface which divides the set $Q$ in two subdomains.
    \\
 More precisely, we denote by
    $$    Q_\e^+=\left\{x \in Q\ |\ x_N>\e^k g\left(\frac{x'}{\e}\right)\right\}$$  the upper part of $Q$
    and by 
    $$    Q_\e^-=\left\{x \in Q\ |\ x_N<\e^k g\left(\frac{x'}{\e}\right)\right\}$$
 its lower part (see Figure 1).
 \vfill
 \begin{figure}[h]
    \begin{center}
\begin{tikzpicture}[scale=1.5]
	\draw[dimens1](-1,0) -- (5,0);
	\draw[dimens1](0,-1.5) -- (0,1.5);
	 \draw[name path=A,black, thick,domain=4:16] plot(0.25*(\x, {0.75*(\x-floor(\x)))});
	\draw[name path=B,white] (1,-1) -- (4,-1);
	\draw[name path=C,white] (1,1) -- (4,1);
	\tikzfillbetween[of=A and B]{blue1a};
	\tikzfillbetween[of=A and C]{blue1};
	\draw[black,thick,domain=4:16] plot(0.25*(\x, {0.75*(\x-floor(\x)))});
	\draw[dimens1](-1,0) -- (5,0);
	\draw[dash pattern=on 5pt off 1.7pt] (5,1) -- (-0.1,1) node[left] {\(\ell\)};
	\draw[dash pattern=on 5pt off 1.7pt] (5,-1)-- (-0.1,-1)node[left] {\(-\ell\)};
	\draw[dash pattern=on 5pt off 1.7pt] (5,0.41) -- (-0.1,0.41) node[left] {\(\varepsilon^k\bar{g}\)};
  	\draw[decorate,decoration={brace,mirror}] (1,-0.05) -- (1.25,-0.05) ;
	\node at (-0.13,-0.2) {\(0\)};
		\node at (4,1.5) {\(Q\)};
  	\node at (1.13,-0.25) {\(\varepsilon\)};
   	\node at (1.5,-0.5) {\(Q_\varepsilon^-\)};
 	\node at (1.5,0.7) {\(Q_\varepsilon^+\)};
 	\node at (3.5,-0.2) {\(\Gamma^\varepsilon\)};
\end{tikzpicture}
\caption{\it The cylinder $Q$ with rough interface $\gae$ }\label{dom1}
\end{center}
\end{figure} 

As observed in \cite{donpiat}, in the case $0<k<1$, one has a fastly oscillating interface. When $k=1$, the interface $\gae$ is obtained by homothetic dilatation of the fixed function $y_N=g(y')$ in $\R^N$, hence, it presents a self-similar geometry. Finally, the flat case is obtained for $k>1$.
\\
In view of \eqref{gbar}, the oscillating surface is contained in the cylinder 
 \begin{equation}\label{se'}
\displaystyle S_\ep=\omega\times [0,\e^k\bar{g}] 
    \end{equation}
    and, as $\ep$ goes to zero,
     \begin{equation}\label{se}
\displaystyle |S_\ep|\to 0.
    \end{equation}
    
 \begin{remark} The cylindrical domain can be generalized  to an arbitrary smooth domain $Q$, where in any point of $\partial Q\cup  \{x | x_N = 0\}$ the normal to $\partial Q$ is not parallel to the $N$-th coordinate vector.
\end{remark}

    \vskip 2mm
Now, we state the specific hypotheses satisfied by  the matrix  $D^\e$ and the nonlinear lower order and  flux terms. Thus, let us consider the following assumptions:

\medskip

(\textbf{A$1$}) \quad For any $\e>0$, the coeffficients matrix $D^\e$ is defined by
$D^\e(x)=D\left(\frac{x}{\e}\right)$, where $D$ is $Y$-periodic, symmetric and satisfies
    \begin{equation*}
   \ds (D(x)\lambda,\lambda)\geq \alpha |\lambda|^2,\qquad  |D(x)\lambda|\leq \beta|\lambda|, \qquad \forall \lambda \in \R^N \hbox{ and a.e. in } Y,
    \end{equation*}
   with $\alpha, \beta\in \R$, $0<\alpha< \beta$.

\bigskip
   
(\textbf{A$2$})\quad The function $h_1:\mathbb{R}\rightarrow \mathbb{R}$ is such that
\begin{itemize}
\item [1.] $h_1$ is continuous;
\item [2.] $h_1$ is increasing and $h_1(0)=0$;
\item [3.] there exist a positive constant $C$ and an exponent $q_1$, with
\begin{equation}\label{assqi}
1\leq q_1<2\, \text{ if }N\in\{ 2,3\}\quad\text{and}\quad 1\leq q_1\leq \dfrac{N+2}{N-2}=\dfrac{2^*}{(2^*)'} \text{ if }N>3
\end{equation}
such that
\begin{equation}\label{assg*}
|h_1(z)|\leq C\left(1+| z|^{q_1}\right).
\end{equation}
\end{itemize}
  
 \bigskip
 
  (\textbf{A$3$})\quad The flux term $h_2:\mathbb{R}\rightarrow \mathbb{R}$ is such that
\begin{itemize}
\item [1.] $h_2$ is continuous; 
\item [2.] $h_2$ is an increasing function;
\item [3.] there exists a positive constant $C$ such that
\begin{equation}\label{assg**}
zh_2(z)\geq C z^2,\quad\forall s\in \mathbb{R};
\end{equation}
\item [4.] there exist a positive constant $C$ and an exponent $q_2$, with
\begin{equation}\label{asspi}
1\leq q_2< 2\, \text{ if }N\in\{2,3\}\quad\text{and}\quad 1\leq q_2\leq \dfrac{N}{N-2} \text{ if }N>3
\end{equation}
such that
\begin{equation}\label{assgi}
|h_2(z)|\leq C\left(1+| z|^{q_2}\right).
\end{equation}
  \end{itemize}
  
    \begin{remark}\label{remfi}
 Let us observe that assumption  (\textbf{A$2$}) implies
 $$z\,h_1 (z) \geq 0\quad \text{ for all } z\in \mathbb{R}.$$
  \end{remark}
  
\indent We now precise the functional framework. To this aim, for any function $v$ defined on $Q$, we set
$$v_\e^+=v_{|Q_\e^+}, \quad v_\e^-=v_{|Q_\e^-}.$$

Hence, we introduce the Hilbert space
$$W_0^\e=\{v \in L^2(Q)\ |\ v_\e^+ \in H^1(Q_\e^+), v_\e^- \in H^1(Q_\e^-), v=0 \hbox{ on }\partial Q\},$$ 
equipped with the norm
\begin{equation}\label{normwe0}
\|v\|_{W_0^\e}=\|\n v\|_{L^2(Q\setminus \gae)},
\end{equation}
where
\begin{equation}\label{grad}
\n v=\widetilde{\n v_\e^+}+\widetilde{\n v_\e^-}.
\end{equation}
In view of \eqref{grad}, we identify $\n v$ with the absolute continuous part of the gradient of $v$.

Moreover, to describe the limit domain, we set
$$Q^+=\{x\in Q\ |\ x_N>0\}, \quad Q^-=\{x\in Q\ |\ x_N<0\}, \quad \G_0=\{x\in Q\ |\ x_N=0\},$$
and
$$v^+=v_{|Q^+}, \quad v^-=v_{|Q^-},$$
for any function $v$ defined on $Q$ (see Figure \ref{dom2}).
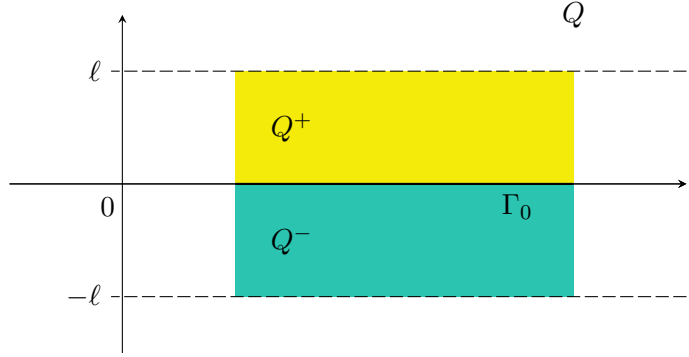
\begin{figure}[h]
\begin{center}
\begin{tikzpicture}[scale=1.5]
	\draw[dimens1](-1,0) -- (5,0);
	\draw[dimens1](0,-1.5) -- (0,1.5);
	\draw[name path=A,white] (1,0) -- (4,0);
	\draw[name path=B,white] (1,-1) -- (4,-1);
	\draw[name path=C,white] (1,1) -- (4,1);
	\tikzfillbetween[of=A and B]{blue1a};
	\tikzfillbetween[of=A and C]{blue1};
	\draw[thick] (1,0) -- (4,0);
	\draw[dimens1](-1,0) -- (5,0);
	\draw[dash pattern=on 5pt off 1.7pt] (5,1) -- (-0.1,1) node[left] {\(\ell\)};
	\draw[dash pattern=on 5pt off 1.7pt] (5,-1)-- (-0.1,-1)node[left] {\(-\ell\)};
		\node at (4,1.5) {\(Q\)};
\node at (-0.13,-0.2) {\(0\)};
	\node at (1.5,-0.5) {\(Q^-\)};
 	\node at (1.5,0.5) {\(Q^+\)};
	 	\node at (3.5,-0.2) {\(\Gamma_0\)};
\end{tikzpicture}
\caption{\it The cylinder $Q$ with flat interface $\Gamma_0$} \label{dom2}
\end{center}
\end{figure}

Observe that
\begin{equation}\label{convchi}
\ds \chi_{Q_\ep^{\pm}}\rightarrow \chi_{Q^{\pm}}\quad \text{ strongly in }L^p(Q),\,1\leq p<+\infty \, \text{ and weakly* in }L^{\infty}(Q).
\end{equation}
Then, we define the Hilbert space
$$W_0^0=\{v \in L^2(Q)\ |\ v^+ \in H^1(Q^+), v^- \in H^1(Q^-), v=0 \hbox{ on }\partial Q\},$$ 
endowed with the norm
\begin{equation}\label{normw00}
\|v\|_{W_0^0}=\|\n v\|_{L^2(Q\setminus \G_0)},
\end{equation}
with $\n v$ defined similarly to \eqref{grad}.
Let us observe that \eqref{normwe0} and \eqref{normw00} are norms, as a consequence of the Poincar\'{e} inequality. In particular, we explictly observe that, for any $v \in W^\e_0$, there exists a positive constant C, independent of $\e$, such that
\begin{equation}\label{P}
\|v\|_{L^2(Q)}\leq  C \|\n v\|_{L^2(Q\setminus \gae)}.
\end{equation}
\indent The variational formulation of problem \eqref{prob} is then given by
\begin{equation} \label{probvar}
	\left\lbrace
	\begin{array}{l}
	\ds\text{Find } u_\e \in W_0^\e  \text{ such that  }  \\[5mm]
	{\ds \int_{Q \setminus \gae} D^\e \n u_\e \n \varphi\, dx }+ \e^{\gamma} {\displaystyle \int_{\gae}h_2(u^+_\e - u^-_\e)(\varphi_{\e}^+-\varphi_{\e}^-)\, d\sigma}
+{\ds \int_{Q} h_1(u_\e) \varphi\, dx}=
{\ds \int_{Q} f \varphi\, dx},\\[5mm]
\text{ for all } \varphi \in W_0^\e.
	\end{array}
	\right.
	\end{equation}	
 Under our assumptions the above integrals make sense and problem \eqref{probvar} admits a unique solution (see Section \ref{secexuniq} for details).

		\subsection{Statement of the main result}
In order to describe the asymptotic behaviour of the sequence of solutions of problem \eqref{probvar}, let us introduce the homogenized tensor $D^0$ (see also \cite{ben}) defined by
\begin{equation}\label{hommat}
\ds D^0\lambda=\mathcal{M}_{Y}(D(\n \omega_{\lambda})),
\end{equation}
with $\omega_{\lambda}\in H^1(Y)$  solution, for any $\lambda\in \mathbb{R}^N$ of
\begin{equation}\label{aux}
\left\{
\begin{array}{ll}
\ds -\hbox{div} \left( D\n \omega_{\lambda}\right)=0&\text{ in }Y,\\[2mm]
\ds \omega_{\lambda}-\lambda\cdot y & Y- \text{ periodic},\\[2mm]
	\mathcal{M}_Y(w_\lambda-\lambda \cdot y)=0.

\end{array}
\right.
\end{equation}

\vskip 3mm
 The main result of our paper is given in the following theorem whose proof is contained in Section \ref{sechomo}:
\begin{theorem}\label{theo:homores2}
Under assumptions $(\textbf{A}_g)$, $(\textbf{A}1)$, $(\textbf{A}2)$, $(\textbf{A}3)$  and given  $f\in L^2(Q)$, let $u_\e$ be the solution of problem \eqref{probvar}. Then, there exists a function $u \in W^0_0$ such that
\begin{equation}\label{convfin}
\left\{\begin{array}{lll}
\ds u_\e \rw u &\hbox{ strongly in }L^2(Q),\\[2mm]
		\ds \chi\strut_{Q^+_\e}\n u_{\e} \ru \chi\strut_{Q^+} \n u &\text{ weakly in }(L^2(Q))^N,\\[2mm]
		\ds \chi\strut_{Q^-_\e}\n u_{\e} \ru \chi\strut_{Q^-} \n u &\text{ weakly in }(L^2(Q))^N.
		\end{array}\right.
		\end{equation}

\vskip 0.3cm

	\noindent \textbf{\emph{} Case A):} \,  $(k\geq 1$ and $\g=0)$\, {or}\, $(0<k<1$ and $\g=1-k)$
	
	\vskip 0.3cm
The limit function $u 	\in W^0_0$ is the unique solution of problem
\begin{equation}\label{prolimi1}
\left\{\begin{array}{ll}
\ds -\dive(D^0\n u)+h_1(u)=f & \textnormal{ in }Q\setminus \Gamma_0,\\[2mm]
\ds (D^0\n u)^+\cdot \nu=(D^0\n u)^-\cdot \nu & \textnormal{ on }\Gamma_0,\\[2mm]
\ds (D^0\n u)^+\cdot \nu=- {G}\,h_2(u^+-u^-) & \textnormal{ on }\Gamma_0,\\[2mm]
\ds u=0 & \textnormal{ on }\partial Q,
\end{array}
\right.
\end{equation}
where $\nu$ is the unit outward normal to $Q^+$ and
\begin{equation}\label{H}
\ds {G}=\left\{
\begin{array}{ll}
\ds \mathcal{M}_{Y'}\left(\sqrt{1+\left|\n_{y'} g(y')\right|^2}\right) &\hbox{ if } k=1, \g=0,\\[2mm]
\ds 1 &\hbox{ if } k>1, \g=0,\\[2mm]
\ds \mathcal{M}_{Y'}\left(\left|\n_{y'} g(y')\right|\right) &\hbox{ if } 0<k<1, \g=1-k.
\end{array}	
\right.
\end{equation}
\vskip 5mm
	\noindent \textbf{\emph{} Case B):} \,  $(k\geq 1$ and $\g>0)$ \, {or} \, $(0<k<1$ and $\g>1-k)$
	\vskip0.3cm
The limit function $u	\in W^0_0$
 is the unique solution of problem
\begin{equation}\label{prolimi2}
\left\{\begin{array}{ll}
\ds -\dive(D^0\n u)+h_1(u)=f & \textnormal{ in }Q\setminus \Gamma_0,\\[2mm]
\ds (D^0\n u)^-\cdot \nu=(D^0\n u)^+\cdot \nu=0 & \textnormal{ on }\Gamma_0,\\[2mm]
\ds u=0 & \textnormal{ on }\partial Q,
\end{array}
\right.
\end{equation} where $\nu$ is the unit outward normal to $Q^+$. 

\vskip 5mm

\noindent  \textbf{\emph{} Case C):} \, $({k\geq 1}$ and ${\g<0})$ \, {or} \,   $({0<k<1}$ and ${\g<1-k})$

\vskip 0.3cm
The limit function $u$ belongs to $H^1_0(Q)$ and it is the unique solution of problem

\begin{equation}\label{prolimi3}
\left\{\begin{array}{ll}
\ds -\dive(D^0\n u)+h_1(u)=f & \textnormal{ in }Q,\\[2mm]
\ds u=0 & \textnormal{ on }\partial Q.
\end{array}
\right.
\end{equation}

\vskip 5mm
\end{theorem}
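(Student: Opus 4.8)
The plan is to combine a priori estimates, a compactness extraction, Tartar's oscillating test function method for the principal part, and a monotonicity (Minty) argument for the nonlinear terms, while tracking the $\e$-scaling of the surface measure on $\gae$ so as to produce the constant $G$ and to separate the three regimes. Concretely, choosing $\varphi=u_\e$ in \eqref{probvar} and using the ellipticity in (\textbf{A}1), the coercivity \eqref{assg**} of $h_2$, the sign condition $z\,h_1(z)\ge0$ from Remark \ref{remfi}, and the Poincar\'e inequality \eqref{P}, I would obtain the uniform bounds $\|\n u_\e\|_{L^2(Q\setminus\gae)}\le C$ and $\e^{\g}\int_{\gae}|[u_\e]|^2\,d\sigma\le C$. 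By the compactness results recalled and unified in Section \ref{seccomp}, these bounds yield, along a subsequence, a limit $u\in W_0^0$ satisfying the three convergences in \eqref{convfin}, together with weak $L^2$ limits $\sigma^{\pm}$ of $\chi_{Q_\e^{\pm}}D^\e\n u_\e$.

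Next I would identify $\sigma^{\pm}=\chi_{Q^{\pm}}D^0\n u$ by Tartar's method: for $\lambda\in\R^N$ the functions $\e\,\omega_\lambda(\cdot/\e)$ built from the cell corrector \eqref{aux} satisfy $\n\big(\e\,\omega_\lambda(\cdot/\e)\big)\ru\lambda$, $D^\e\n\big(\e\,\omega_\lambda(\cdot/\e)\big)\ru D^0\lambda$ and $\dive\big(D^\e\n(\e\,\omega_\lambda(\cdot/\e))\big)=0$, so that the div--curl lemma on compact subsets of $Q^{\pm}$, together with the symmetry of $D$, forces $\sigma^{\pm}=D^0\n u$ there, hence on $Q^{\pm}$. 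Since $u_\e\rw u$ in $L^2(Q)$, hence a.e. along a subsequence, and since $|h_1(z)|\le C(1+|z|^{q_1})$ with $q_1$ subcritical as in \eqref{assqi}, the family $h_1(u_\e)$ is equi-integrable and $h_1(u_\e)\rw h_1(u)$; testing \eqref{probvar} against a fixed $\varphi$ with $\varphi^{\pm}$ smooth up to $\G_0$ and vanishing on $\partial Q$, the bulk and lower order terms then pass to the limit by weak convergence of the flux against the fixed $\n\varphi$. For the interface term I would parametrize $\gae$ over $\omega$, so that $d\sigma=\sqrt{1+\e^{2(k-1)}|\n_{y'}g(x'/\e)|^2}\,dx'$ and the effective weight is $\e^{\g}\sqrt{1+\e^{2(k-1)}|\n_{y'}g(x'/\e)|^2}$; its averaged limit is exactly the constant $G$ of \eqref{H} in Case A, equals $0$ in Case B, and diverges in Case C.

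The identification of the interface term is the heart of the argument. In Case A the measures $\e^{\g}h_2([u_\e])\,d\sigma$ on $\gae$ are bounded, so along a subsequence they converge to a functional $\Lambda$ on $\G_0$ and the limit identity becomes $\int_{Q\setminus\G_0}D^0\n u\,\n\varphi+\langle\Lambda,[\varphi]\rangle+\int_Q h_1(u)\varphi=\int_Q f\varphi$. To identify $\Lambda=G\,h_2([u])$ I would argue by monotonicity: with $\varphi=u_\e$ the total energy converges to $\int_Q f u$, the bulk energy converges to $\int_{Q\setminus\G_0}D^0\n u\,\n u$ by the div--curl lemma, and the lower order energy converges, so by subtraction $\e^{\g}\int_{\gae}h_2([u_\e])[u_\e]\,d\sigma\rw\langle\Lambda,[u]\rangle$. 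Expanding the inequality $\e^{\g}\int_{\gae}\big(h_2([u_\e])-h_2(w)\big)\big([u_\e]-w\big)\,d\sigma\ge0$, passing to the limit in the cross terms by the geometric convergence result of Section \ref{seccomp}, and finally varying $w$ around $[u]$, the Minty trick gives $\Lambda=G\,h_2([u])$, i.e. \eqref{prolimi1}. In Case C the weight diverges, so the bound $\e^{\g}\int_{\gae}|[u_\e]|^2\,d\sigma\le C$ forces $[u]=0$, that is $u\in H^1_0(Q)$, and testing only with continuous $\varphi$ (for which $[\varphi]=0$ on $\gae$) annihilates the interface term and yields \eqref{prolimi3}. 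In Case B the weight vanishes, and using the growth \eqref{assgi}, the a priori jump bound and the Cauchy--Schwarz inequality one checks $\e^{\g}\int_{\gae}h_2([u_\e])[\varphi]\,d\sigma\rw0$ even for discontinuous $\varphi$, which produces the decoupled Neumann problem \eqref{prolimi2}.

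Each limit problem has a unique solution by strict monotonicity --- ellipticity of $D^0$ for the principal part, and $h_1$, $h_2$ increasing --- so the whole sequence converges and the extracted subsequence is immaterial. I expect the \textbf{main obstacle} to be the identification of the nonlinear interface limit in Case A: one must pass to the limit in $h_2([u_\e])$ on the oscillating surface $\gae$ with only weak control of $[u_\e]$, which forces the simultaneous use of the exact energy convergence (itself requiring control of the bulk energy across the shrinking strip $S_\e$) and the Minty monotonicity argument, while correctly extracting the geometric constant $G$ from the oscillating surface measure.
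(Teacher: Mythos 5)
Your overall skeleton (a priori estimates, compactness from Section \ref{seccomp}, Tartar identification of the flux, the jump bound forcing $u^+_{|\Gamma_0}=u^-_{|\Gamma_0}$ in case C, vanishing weight in case B, uniqueness of the limit problems upgrading subsequence convergence) matches the paper, but your treatment of case A contains a genuine gap, and your test functions are inadmissible as written. First, a $\varphi$ whose parts $\varphi^{\pm}$ are smooth up to $\Gamma_0$ but jump across $\Gamma_0$ does not belong to $W_0^\e$: since $g>0$, the flat interface $\Gamma_0$ lies in the interior of $Q^-_\e$, so $\varphi_{|Q^-_\e}\notin H^1(Q^-_\e)$. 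The paper fixes this by writing $\varphi^+=\psi_{1|Q^+}$, $\varphi^-=\psi_{2|Q^-}$ with $\psi_i\in H^1_0(Q)$ (first in $\mathcal{D}(Q)$, then by density) and testing with $\varphi_\e=\chi_{Q^+_\e}\psi_1+\chi_{Q^-_\e}\psi_2$, whose jump sits on $\gae$; this is easy to adopt but necessary. The serious problem is your identification of the interface limit in case A. Your Minty argument hinges on the exact convergence of the bulk energy $\int_{Q\setminus\gae}D^\e\n u_\e\,\n u_\e\,dx$ to $\int_{Q\setminus\Gamma_0}D^0\n u\,\n u\,dx$, which you attribute to the div--curl lemma. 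But the div--curl lemma yields only distributional convergence of $D^\e\n u_\e\cdot\n u_\e$ on compact subsets of $Q^{\pm}$; to pass to the integral over the whole domain you must exclude energy concentration near $\partial Q$ and, worse, in the shrinking strip $S_\e$ carrying the oscillating interface --- precisely the point you flag as the main obstacle and then leave unresolved. Extracting the interface energy by subtraction from the energy identity is circular (it is the unknown term), and at the critical growth $q_1=\frac{N+2}{N-2}$ allowed by \eqref{assqi} even the convergence of $\int_Q h_1(u_\e)\,u_\e\,dx$ does not follow from equi-integrability.

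The paper's route bypasses all of this, and you in fact already hold the key tool: Proposition \ref{propdonpiat2} applies to \emph{any} sequence bounded in $W_0^\e$, in particular to $u_\e$ itself by \eqref{estsol}, and gives \emph{strong} $L^2(\omega)$ convergence of the traces $u_\e^{\pm}\left(\cdot,\e^k g\left(\frac{\cdot}{\e}\right)\right)\rightarrow u^{\pm}(\cdot,0)$ on the oscillating surface. By continuity of the Nemytskii operator associated with $h_2$ (assumption $(\textbf{A}3)_{1,4}$), this yields the strong convergence $h_2(u_\e^+-u_\e^-)\rightarrow h_2\left(u^+(\cdot,0)-u^-(\cdot,0)\right)$ in $L^{2/q_2}(\omega)$, as in \eqref{convfh}. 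Pairing this strong convergence with the weak-$*$ convergence of the scaled weight $\e^{\g}\sqrt{1+\e^{2(k-1)}\left(\left|\n_{y'}g(y')\right|^2\right)_{|y'=x'/\e}}$ to $G$ (Proposition \ref{propg}) and with the convergence of the test traces from Lemma \ref{lemCeFrPiat}, the interface term passes to the limit directly --- no measure limit $\Lambda$, no energy convergence, no Minty argument. Notice that your own cross-term step already invokes this trace result, so the monotonicity machinery is not only unjustified at the energy step, it is superfluous once the trace convergence is exploited. Your cases B and C are essentially sound modulo the test-function correction (in case C with $0<k<1$ one should rescale by $\e^{1-k}$ before passing to the limit in $\int_{\gae}(u_\e^+-u_\e^-)^2\,d\sigma$, since the surface measure blows up like $\e^{k-1}$, which is how the paper handles it).
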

\begin{remark} 
We observe that  in the three limit problems we always get a homogenized  elliptic equation with a nonlinear lower order term. The main differences are due to the interfacial contribution. In particular, in case A), we obtain an effective imperfect transmission problem where the
flux is proportional, by means of a nonlinear function, to the jump of the solution via different constants whose values depend on $k$ and $\gamma$. In case B),  the flat surface $\Gamma_0$ is an isolating interface. Indeed, one obtains a homogenized problem equivalent to two independent problems set in  $Q^+$ and $Q^-$, with homogeneous Neumann boundary conditions on $\Gamma_0$ (see also \cite{donpiat}). Finally, in case C), the presence of the interface is neglectful since we find a homogenized  Dirichlet elliptic problem in the whole cylinder $Q$.  
\end{remark}

\section{Some preliminary and compactness results}\label{seccomp}
This section is devoted to some tools, related to the geometry of the domain, needed to study the asymptotic behaviour of problem \eqref{probvar}. 

We start stating a generalized version of a lemma proved in \cite{CeFrPiat}  (see also \cite{donpiat}). The proof can be easily obtained following along the lines the one of Lemma 1 in \cite{CeFrPiat}, with opportune modifications. 
\begin{lemma}\label{lemCeFrPiat} Let $p\geq1$. For any $w_1\in W^{1,p}(Q^-_\ep)$, $w_2\in W^{1,p}(Q^+_\ep)$ and $w_3\in W^{1,p}(S_\ep)$ it holds
\begin{equation}\label{CeFrPiat1}
\ds \left\|w_1\left(x',\ep^kg\left(\frac{x'}{\ep}\right)\right)-w_1\left(x',0\right)\right\|_{L^2(\omega)}\leq C{\ep^{k/p}}\|w_1\|_{W^{1,p}(Q^-_\ep)},
\end{equation} 
\begin{equation}\label{CeFrPiat2}
\ds \left\|w_2\left(x',\ep^kg\left(\frac{x'}{\ep}\right)\right)-w_2\left(x',\ep^k\bar{g}\right)\right\|_{L^2(\omega)}\leq C{\ep^{k/p}}\|w_2\|_{W^{1,p}(Q^+_\ep)},
\end{equation}
and
\begin{equation}\label{CeFrPiat3}
\ds \left\|w_3\left(x',\ep^k\bar{g}\right)-w_3\left(x',0\right)\right\|_{L^2(\omega)}\leq C{\ep^{k/p}}\|w_3\|_{W^{1,p}(S_\ep)},
\end{equation}
with $C$ positive constant independent of $\ep$.
\end{lemma}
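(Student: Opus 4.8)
The three estimates are variations of a single trace-type bound, so the plan is to prove \eqref{CeFrPiat1} carefully and note that the other two follow by the same reasoning. The core idea is to write the difference of boundary values as an integral of the normal derivative along vertical segments, and then control that integral in $L^2(\omega)$. First I would fix a point $x'\in\omega$ and, for $w_1$ smooth (say $w_1\in C^1(\overline{Q^-_\ep})$, passing to the general case by density of smooth functions in $W^{1,p}$), write
\begin{equation}\label{ftc}
w_1\!\left(x',\ep^k g\!\left(\tfrac{x'}{\ep}\right)\right)-w_1(x',0)=\int_0^{\ep^k g(x'/\ep)}\partial_{x_N} w_1(x',t)\,\dd t,
\end{equation}
which is legitimate since the whole segment $\{x'\}\times[0,\ep^k g(x'/\ep)]$ lies in $\overline{Q^-_\ep}$ (recall $g>0$ by \textbf{A}$_g$).

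The next step is to estimate the right-hand side of \eqref{ftc}. Applying Hölder's inequality on the segment of length $\ep^k g(x'/\ep)\leq \ep^k\bar g$ gives
\begin{equation}\label{holderstep}
\left|\int_0^{\ep^k g(x'/\ep)}\partial_{x_N} w_1(x',t)\,\dd t\right|^p\leq \bigl(\ep^k\bar g\bigr)^{p-1}\int_0^{\ep^k\bar g}\left|\partial_{x_N} w_1(x',t)\right|^p\,\dd t.
\end{equation}
I would then integrate \eqref{holderstep} over $x'\in\omega$ and recognize the double integral on the right as $\|\partial_{x_N}w_1\|_{L^p(S_\ep)}^p\leq\|\n w_1\|_{L^p(Q^-_\ep)}^p$, so that the $L^p(\omega)$-norm of the left-hand side of \eqref{ftc} is bounded by $C\ep^{k/p}\|w_1\|_{W^{1,p}(Q^-_\ep)}$. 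Since I want the $L^2(\omega)$-norm rather than the $L^p(\omega)$-norm, I would pass from $L^p(\omega)$ to $L^2(\omega)$; for $p\geq 2$ this is immediate from Hölder on the bounded set $\omega$, and for $1\leq p<2$ one must instead interpolate the trace estimate with the trace theorem, which is precisely the point at which the argument needs care.

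The main obstacle is therefore this passage to the $L^2(\omega)$ norm when $1\leq p<2$ and, more fundamentally, justifying the trace \eqref{ftc} for functions that are merely $W^{1,p}$ rather than smooth, since the lower boundary of $Q^-_\ep$ is the oscillating graph $\gae$ whose Lipschitz constant degenerates as $\ep\to0$. The delicate book-keeping is to ensure the constant $C$ in \eqref{CeFrPiat1} stays independent of $\ep$ despite the increasing oscillation; this is exactly where following the proof of Lemma~1 in \cite{CeFrPiat} with the indicated modifications is essential, the modification being that the oscillation amplitude is $\ep^k$ rather than $\ep$, which is what produces the factor $\ep^{k/p}$ in place of $\ep^{1/p}$. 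For \eqref{CeFrPiat2} the identical argument applies on $Q^+_\ep$ using vertical segments from height $\ep^k g(x'/\ep)$ up to the constant height $\ep^k\bar g$ (whose length is again $\leq\ep^k\bar g$), and for \eqref{CeFrPiat3} one integrates $\partial_{x_N}w_3$ across the full slab $S_\ep=\omega\times[0,\ep^k\bar g]$ of thickness $\ep^k\bar g$, so that all three estimates share the same scaling $\ep^{k/p}$.
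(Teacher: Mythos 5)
Your strategy---writing the difference as the integral of $\partial_{x_N}w_1$ along the vertical segment $\{x'\}\times\left[0,\varepsilon^k g(x'/\varepsilon)\right]$, applying H\"older on a segment of length at most $\varepsilon^k\bar g$, and integrating over $\omega$---is exactly the argument of Lemma 1 in \cite{CeFrPiat} that the paper invokes; the paper offers no further proof beyond that citation, and the ``opportune modification'' is precisely the amplitude $\varepsilon^k$ in place of $\varepsilon$, as you say. For $p=2$ your proof is complete and correct, and the same segment argument handles \eqref{CeFrPiat2} and \eqref{CeFrPiat3}. Two small remarks: the oscillating graph is the \emph{upper}, not the lower, boundary of $Q^-_\varepsilon$; and the worry about the degenerating Lipschitz constant of $\Gamma^\varepsilon$ is a red herring here, since the fundamental theorem of calculus on almost every vertical line is valid for any $W^{1,p}$ function by the absolute-continuity-on-lines characterization, with no boundary regularity needed and no hidden $\varepsilon$-dependence in the constant.

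The genuine gap is quantitative and sits exactly where you flagged ``care'' without supplying it. Integrating your H\"older bound over $\omega$ gives
\[
\left\|w_1\left(\cdot,\varepsilon^kg\left(\tfrac{\cdot}{\varepsilon}\right)\right)-w_1\left(\cdot,0\right)\right\|_{L^p(\omega)}\leq \left(\varepsilon^k\bar g\right)^{\frac{p-1}{p}}\left\|\partial_{x_N}w_1\right\|_{L^p(S_\varepsilon)},
\]
i.e.\ the rate $\varepsilon^{k/p'}$, \emph{not} $\varepsilon^{k/p}$; the two exponents coincide only at $p=2$. For $p\geq 2$ this is harmless: $\varepsilon^{k/p'}\leq\varepsilon^{k/p}$ for $\varepsilon\leq 1$, and H\"older on the bounded set $\omega$ converts the $L^p(\omega)$ bound into the stated $L^2(\omega)$ bound, so your plan closes there. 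For $1\leq p<2$, however, no interpolation with the trace theorem can rescue the step, because the claimed $L^2(\omega)$ estimate with rate $\varepsilon^{k/p}$ is then false. Indeed, take $w_1$ depending only on $x_N$, equal to $1$ for $x_N\leq 0$ and to $1-x_N/(\varepsilon^k\bar g)$ for $0\leq x_N\leq \varepsilon^k\bar g$: the left-hand side of \eqref{CeFrPiat1} equals $\left\|g\left(\cdot/\varepsilon\right)\right\|_{L^2(\omega)}/\bar g$, which stays bounded away from zero, while $\varepsilon^{k/p}\|w_1\|_{W^{1,p}(Q^-_\varepsilon)}\sim C\,\varepsilon^{k/p}\,\varepsilon^{-k/p'}=C\,\varepsilon^{k(2-p)/p}\to 0$. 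So what your computation actually establishes is the $L^p(\omega)$ estimate with rate $\varepsilon^{k/p'}$ for all $p\geq 1$, hence the lemma as stated for $p\geq 2$; this covers every quantitative use the paper makes of it (always $p=2$, while the application to fixed $\psi_i\in\mathcal{D}(Q)$ only requires some positive power of $\varepsilon$). You should therefore either restrict the statement to $p\geq 2$, proving it as you did, or restate it with the $L^p(\omega)$ norm and the exponent $k/p'$.
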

Now, we recall a compactness result given in \cite{donpiat}. 
\begin{proposition}\label{propdonpiat1}
Let $\{w_\e \}$ be a family of functions in $W_0^\e$ such that
\begin{equation}\label{stima}
\ds \|w_\e\|_{W_0^\e}\leq C,
\end{equation}
with $C$ positive constant independent of $\e$. Then, there exist a subsequence (still denoted $\e$) and a function $w$
in $W_0^0$ such that
\begin{equation}\label{convw}
		\left\{\begin{array}{lll}
		\ds w_{\e}\rw w &\text{ strongly in } L^2(Q) \text{ and a.e. in }Q,\\[2mm]
		\ds \chi\strut_{Q^+_\e}\n w_{\e} \ru \chi\strut_{Q^+} \n w &\text{ weakly in }(L^2(Q))^N,\\[2mm]
		\ds \chi\strut_{Q^-_\e}\n w_{\e} \ru \chi\strut_{Q^-} \n w &\text{ weakly in }(L^2(Q))^N.
		\end{array}\right.
		\end{equation}
\end{proposition}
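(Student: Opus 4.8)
The plan is to establish the a priori bound \eqref{stima} as a hypothesis and extract convergent subsequences by standard weak-compactness arguments, then identify the limits carefully because the domains $Q^+_\ep$ and $Q^-_\ep$ depend on $\ep$. First I would note that \eqref{stima} together with the Poincar\'e inequality \eqref{P} gives a uniform bound on $\|w_\e\|_{L^2(Q)}$, so that $w_\e$ is bounded in $L^2(Q)$ and, by \eqref{normwe0}--\eqref{grad}, the absolutely continuous gradient $\n w_\e = \widetilde{\n (w_\e)^+_\e} + \widetilde{\n (w_\e)^-_\e}$ is bounded in $(L^2(Q))^N$. Multiplying by the characteristic functions, the sequences $\chi_{Q^+_\e}\n w_\e$ and $\chi_{Q^-_\e}\n w_\e$ are each bounded in $(L^2(Q))^N$. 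Hence, up to a subsequence, $w_\e \ru w$ weakly in $L^2(Q)$ and $\chi_{Q^\pm_\e}\n w_\e \ru \xi^\pm$ weakly in $(L^2(Q))^N$ for some limits $w,\xi^+,\xi^-$.

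The central task is to show that $w \in W^0_0$ with $\xi^\pm = \chi_{Q^\pm}\n w$, and that the $L^2$ convergence can be upgraded to strong convergence. I would first localize: for any fixed compact $K \subset\subset Q^+$, the interface $\gae$ lies in the thin strip $S_\ep = \omega\times[0,\ep^k\bar g]$ by \eqref{se'}, so for $\ep$ small enough $K \subset Q^+_\ep$ and there $w_\e \in H^1(K)$ with $\n w_\e = \chi_{Q^+_\e}\n w_\e$; passing to the limit in the distributional definition of the gradient on $K$ identifies $\xi^+ = \n w$ on $K$, and since $K$ is arbitrary this gives $w^+ \in H^1(Q^+)$ with $\xi^+ = \n w^+ = \chi_{Q^+}\n w$ on $Q^+$. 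The symmetric argument on compacts of $Q^-$ handles $\xi^- = \chi_{Q^-}\n w$; on the respective complements the characteristic factors vanish in the limit by \eqref{convchi}, so $\xi^\pm$ are supported in $Q^\pm$. The boundary condition $w=0$ on $\partial Q$ survives because the traces on $\partial Q$ are controlled away from the vanishing strip. This confirms $w \in W^0_0$ and the second and third lines of \eqref{convw}.

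The main obstacle is upgrading weak $L^2$ convergence of $w_\e$ to strong convergence and a.e.\ convergence, since the pieces $(w_\e)^\pm_\e$ live on the moving domains and one cannot directly invoke Rellich on the fixed cylinder without first controlling the behaviour across the oscillating interface. Here Lemma \ref{lemCeFrPiat} is the essential tool: estimates \eqref{CeFrPiat1}--\eqref{CeFrPiat2} bound, in $L^2(\omega)$, the difference between the traces of $w_\e$ on $\gae$ and on the flat boundaries $\omega\times\{0\}$ and $\omega\times\{\ep^k\bar g\}$ by $C\ep^{k/2}\|w_\e\|_{W^\e_0}$, which tends to zero by \eqref{stima}; this shows the jump of $w_\e$ across the interface becomes negligible in the limit and that $w_\e$ behaves, asymptotically, as a single $H^1$ function on the thin strip, so no spurious jump is created at $\G_0$. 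With the uniform $H^1$-type bound on each fixed subdomain away from the strip, Rellich compactness yields strong $L^2$ convergence on every compact subset of $Q^\pm$; combined with the uniform bound and the vanishing measure \eqref{se} of the strip where control is weakest, one obtains $w_\e \rw w$ strongly in $L^2(Q)$, and then a further subsequence converges a.e.\ in $Q$. This establishes the first line of \eqref{convw} and completes the proof.
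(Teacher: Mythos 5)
Your weak-compactness setup and the identification of the weak limits are sound and standard: the bound \eqref{stima} together with \eqref{P} yields weak limits, localization on compacts $K\subset\subset Q^{\pm}$ (using that $g>0$ gives $Q^+_\ep\subset Q^+$, $Q^-\subset Q^-_\ep$, and $\ep^k\bar g\to 0$) identifies $\xi^{\pm}=\chi\strut_{Q^{\pm}}\n w$, and the support argument via \eqref{convchi} is fine; note also that the paper itself offers no proof here, recalling the proposition from \cite{donpiat}. The genuine problem is in your strong-convergence step, and it is twofold. First, you misread Lemma \ref{lemCeFrPiat}: estimates \eqref{CeFrPiat1} and \eqref{CeFrPiat2} each compare two traces of the \emph{same} one-sided restriction ($w^-_\ep$ at heights $\ep^k g(\cdot/\ep)$ and $0$, respectively $w^+_\ep$ at heights $\ep^k g(\cdot/\ep)$ and $\ep^k\bar g$); they say nothing about the jump $w^+_\ep-w^-_\ep$ across $\gae$. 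Your conclusion that ``the jump of $w_\e$ across the interface becomes negligible'' and that $w_\ep$ behaves asymptotically as a single $H^1$ function with no jump at $\G_0$ is false: the limit space is $W^0_0$ precisely because a jump across $\G_0$ survives in general, and in case A) of Theorem \ref{theo:homores2} the homogenized flux condition in \eqref{prolimi1} is driven by the generally nonzero jump $u^+-u^-$. If your claim were true, every limit would belong to $H^1(Q)$ and case A) would be vacuous. Fortunately, strong $L^2(Q)$ convergence needs no control of the jump whatsoever, so this part of your argument should simply be removed rather than repaired.

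Second, your closing inference --- ``the uniform bound and the vanishing measure \eqref{se} of the strip'' imply strong $L^2(Q)$ convergence --- is not a proof: an $L^2$-bounded sequence can concentrate a fixed amount of $L^2$ mass on sets of vanishing measure, so $\int_{S_\ep}|w_\ep|^2\,dx\to 0$ is not free; what is needed is equi-integrability of $|w_\ep|^2$, i.e.\ a uniform higher-integrability bound. This is exactly the device used in Step 1 of the paper's proof of Proposition \ref{propdonpiat2}: extend $w^+_\ep$ from the \emph{flat-boundary} region $Q^+\setminus S_\ep$ as in \eqref{exdef}, obtain the uniform $L^{2^*}$ bound \eqref{exstima2}, and conclude on the strip by H\"older together with \eqref{se}. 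The point you must not gloss over is that the extension has to be taken from $Q^+\setminus S_\ep$ and not from $Q^+_\ep$: for $0<k<1$ the Lipschitz constant of $\gae$ is of order $\ep^{k-1}$ and blows up, so Sobolev embedding and extension constants on the moving domains $Q^{\pm}_\ep$ are \emph{not} uniform in $\ep$, and a direct Rellich or Sobolev argument there fails --- this is the very reason the paper treats the strip through extensions. Alternatively, an elementary one-dimensional estimate inside each of $Q^{\pm}_\ep$ (expressing $w_\ep(x',x_N)$ through $w_\ep(x',\pm\delta)$ plus an integral of $\partial_N w_\ep$, never crossing $\gae$) yields $\|w_\ep\|^2_{L^2(\omega\times(-\delta,\delta))}\le C\delta$ uniformly in $\ep$ small; combined with Rellich on the fixed sets $\{x\in Q\ |\ x_N>\delta\}$ and on $Q^-$, and a diagonal extraction, this closes strong $L^2(Q)$ convergence, after which a.e.\ convergence follows on a further subsequence. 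With either repair, and with the jump claim deleted, your proof becomes correct.
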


 In \cite{donpiat} the authors prove some convergences for the sequence of solutions $u_\ep$ of problem \eqref{probvar}  by using different extension operators according to the values of $k$.

In the next proposition, inspired by the argument used in \cite{donpiat} for $0<k<1$, we give a unified proof of these convergences valid for $k>0$ and for any uniformly bounded sequence $w_\ep$ in $W_0^\e$. This consents to develop the homogenization process without the introduction of the extension operators used in \cite{donpiat} for the case $k\geq1$, and allow us to treat all the cases  in a unique way.
\begin{proposition}\label{propdonpiat2}
Under the hypotheses of Proposition \ref{propdonpiat1},  there exists a subsequence (still denoted $\e$) such that
\begin{equation}
 \ds w^{\pm}_\e\left(\cdot,\ep^k g\left(\frac{\cdot}{\ep}\right)\right) \, \rw\,
w^{\pm}(\cdot,0) \quad \text{ strongly in } L^2(\omega).
\end{equation}
\end{proposition}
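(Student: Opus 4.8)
The plan is to prove the claim for the $+$ component; the $-$ component is entirely analogous. Throughout, $C$ denotes a constant independent of $\ep$, and I work along the subsequence furnished by Proposition \ref{propdonpiat1}, so that the convergences \eqref{convw} are at my disposal. By \eqref{stima} and the Poincar\'e inequality \eqref{P}, the norms $\|w_\ep^+\|_{H^1(Q_\ep^+)}$ and $\|w_\ep^-\|_{H^1(Q_\ep^-)}$ are bounded uniformly in $\ep$.

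First I would replace the oscillating interface by a nearby flat surface. Applying Lemma \ref{lemCeFrPiat} with $p=2$ (estimate \eqref{CeFrPiat2} for the $+$ part) gives
\[
\big\|w^+_\ep(\cdot,\ep^k g(\cdot/\ep))-w^+_\ep(\cdot,\ep^k\bar g)\big\|_{L^2(\omega)}\le C\,\ep^{k/2}\|w^+_\ep\|_{H^1(Q_\ep^+)}\le C\,\ep^{k/2}\xrightarrow[\ep\to0]{}0,
\]
so it suffices to prove the trace convergence on the flat surface $\{x_N=\ep^k\bar g\}$, namely $w^+_\ep(\cdot,\ep^k\bar g)\to w^+(\cdot,0)$ strongly in $L^2(\omega)$. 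For the $-$ part, \eqref{CeFrPiat1} reduces the problem to showing $w^-_\ep(\cdot,0)\to w^-(\cdot,0)$, which makes sense since $\{x_N=0\}\subset Q_\ep^-$ as $g>0$.

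The core step is a three-term decomposition against an auxiliary fixed surface at height $\delta>0$. For $\ep$ so small that $\ep^k\bar g<\delta$, I write
\[
\|w^+_\ep(\cdot,\ep^k\bar g)-w^+(\cdot,0)\|_{L^2(\omega)}\le \underbrace{\|w^+_\ep(\cdot,\ep^k\bar g)-w^+_\ep(\cdot,\delta)\|_{L^2(\omega)}}_{I_\ep}+\underbrace{\|w^+_\ep(\cdot,\delta)-w^+(\cdot,\delta)\|_{L^2(\omega)}}_{II_\ep}+\underbrace{\|w^+(\cdot,\delta)-w^+(\cdot,0)\|_{L^2(\omega)}}_{III}.
\]
For $I_\ep$ and $III$ I use the fundamental theorem of calculus in $x_N$ together with Cauchy--Schwarz: since the strip $\omega\times(\ep^k\bar g,\delta)$ lies in $Q_\ep^+$, one obtains $I_\ep\le\sqrt{\delta}\,\|\n w^+_\ep\|_{L^2(Q_\ep^+)}\le C\sqrt\delta$ uniformly in $\ep$, and likewise $III\le\sqrt\delta\,\|\n w^+\|_{L^2(Q^+)}$. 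The term $II_\ep$ I would handle on the fixed subdomain $\Omega_\delta:=\omega\times(\delta,\l)$, which for small $\ep$ is contained in $Q_\ep^+\cap Q^+$: there $\chi_{Q_\ep^+}\equiv1$, so \eqref{convw} yields $w_\ep\to w$ in $L^2(\Omega_\delta)$ and $\n w_\ep\ru\n w$ weakly in $(L^2(\Omega_\delta))^N$, i.e. $w_\ep\ru w$ weakly in $H^1(\Omega_\delta)$; since the trace operator $H^1(\Omega_\delta)\to L^2(\omega\times\{\delta\})$ is compact, $II_\ep\to0$ as $\ep\to0$.

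Finally I would conclude by a double limit. Combining the three estimates gives, for every fixed $\delta>0$,
\[
\limsup_{\ep\to0}\|w^+_\ep(\cdot,\ep^k\bar g)-w^+(\cdot,0)\|_{L^2(\omega)}\le C\sqrt\delta+\sqrt\delta\,\|\n w^+\|_{L^2(Q^+)},
\]
and letting $\delta\to0$ forces the left-hand side to vanish; together with the reduction of the first step this proves the $+$ case, and the $-$ component follows identically with the auxiliary surface at height $-\delta$ and the strip $\omega\times(-\delta,0)\subset Q_\ep^-$. I expect the main obstacle to be the term $II_\ep$: passing from the strong $L^2$ and weak gradient convergences of \eqref{convw} to \emph{strong} convergence of traces on a fixed cross-section. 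This is exactly where compactness of the trace operator (equivalently, an averaging/fundamental-theorem-of-calculus argument upgrading weak convergence to strong) is needed, and the fact that the surface $\{x_N=\ep^k\bar g\}$ produced in the first step moves with $\ep$ is precisely what necessitates the fixed intermediate surface and the uniform-in-$\ep$ control of the gap term $I_\ep$.
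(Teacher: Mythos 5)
Your proof is correct, but after a shared first reduction it takes a genuinely different route from the paper's. Both arguments begin the same way: Lemma \ref{lemCeFrPiat} with $p=2$ (estimates \eqref{CeFrPiat2} and \eqref{CeFrPiat1}, combined with the uniform $H^1$ bound from \eqref{stima} and the Poincar\'e inequality \eqref{P}) replaces the oscillating graph by the flat surfaces $\{x_N=\ep^k\bar g\}$ and $\{x_N=0\}$. For the remaining convergence $w_\ep^+(\cdot,\ep^k\bar g)\to w^+(\cdot,0)$ the paper goes \emph{downwards} through the thin slab $S_\ep$: it extends $w_\ep^+$ restricted to $Q^+\setminus S_\ep$ to all of $Q^+$ by a Sobolev extension operator \eqref{exdef}, proves the strong convergence $\check w_\ep^+\to w^+$ in $L^2(Q^+)$ via H\"older and $|S_\ep|\to 0$ (\eqref{exstima2}, \eqref{exint1}), deduces $\check w_\ep^+-w^+\ru 0$ weakly in $H^1(Q^+)$ so that the compact trace at the \emph{fixed} surface $\{x_N=0\}$ gives \eqref{exnorm31}, and then transports this back up to $\{x_N=\ep^k\bar g\}$ by applying \eqref{CeFrPiat3} to $\check w_\ep^+-w^+$ on $S_\ep$. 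You instead go \emph{upwards} to a fixed cross-section $\{x_N=\delta\}$: the two moving-gap terms $I_\ep$ and $III$ are controlled by $C\sqrt\delta$ uniformly in $\ep$ via the fundamental theorem of calculus (the same mechanism that underlies Lemma \ref{lemCeFrPiat}), the middle term $II_\ep$ vanishes by weak $H^1$ convergence on the fixed Lipschitz cylinder $\omega\times(\delta,\l)$ (which follows from \eqref{convw} and the uniform bound, since $\chi_{Q_\ep^+}\equiv 1$ there for small $\ep$) together with compactness of the trace, and a $\limsup$ in $\ep$ followed by $\delta\to 0$ concludes. Your route avoids extension operators entirely --- in particular the $\ep$-uniformity of the constant in \eqref{exstima1} and the Sobolev-embedding step \eqref{exstima2} never need to be checked --- which is, amusingly, even closer to the paper's stated motivation for this proposition than its own proof; the price is a two-parameter limit in place of a single trace-compactness statement, and you lose the by-product \eqref{exconv1} (strong $L^2(Q^+)$ convergence of the extension), which the paper's construction provides. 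One small simplification you could adopt: for the minus component no $\delta$-argument is needed at all, since $\{x_N=0\}$ already bounds the fixed domain $Q^-\subset Q_\ep^-$ (as $g>0$), so, exactly as in the paper's Step 3, the uniform bound and compactness of the trace operator give $w_\ep^-(\cdot,0)\to w^-(\cdot,0)$ in $L^2(\omega)$ directly.
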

\begin{proof}
 To obtain the desired convergences we show that
\begin{equation}\label{k<1lim4}
\ds \left\|w^+_\ep\left(\cdot,\ep^kg\left(\frac{\cdot}{\ep}\right)\right)-w^+\left(\cdot,0\right)\right\|_{L^2(\omega)} \to 0,
\end{equation}
and
\begin{equation}\label{k<1lim5}
\ds \left\|w^-_\ep\left(\cdot,\ep^kg\left(\frac{\cdot}{\ep}\right)\right)-w^-\left(\cdot,0\right)\right\|_{L^2(\omega)} \to 0,
\end{equation} up to a subsequence.

We proceed into three steps.
In the first two steps we show \eqref{k<1lim4}, while \eqref{k<1lim5} is proved in the last one.
\smallskip

{\bf Step 1.} Let us prove that there exists a suitable extension operator denoted by $\check{w}^+_\ep$ such that
\begin{equation}\label{exconv1}
\ds \check{w}^+_\ep \to w^+\quad \text{ strongly in }L^2(Q^+).
\end{equation}  To this aim, we restrict the function $w^+_\ep$ to the set $Q^+\setminus S_\ep$ and define its extension $\check{w}^+_\ep$ to  $Q^+$ as follows: \bk
\begin{equation}\label{exdef}
\ds \check{w}^+_\ep=
\left\{\begin{array}{ll}
\ds w^+_\ep & \text{ in }Q^+\setminus S_\ep,\\[2mm]
\ds Pw^+_\ep & \text{ in } S_\ep,
\end{array}
\right.
\end{equation}
where $P$ denotes the usual extension operator in Sobolev spaces such that
\begin{equation}\label{exstima1}
\ds \|\check{w}^+_\ep\|_{H^1(Q^+)}\leq C\|w^+_\ep\|_{H^1(Q^+\setminus S_\ep)},
\end{equation}
with $C$ positive constant independent of $\ep$.
Due to the Sobolev embeddings and \eqref{stima}, inequality \eqref{exstima1} implies
\begin{equation}\label{exstima2}
\ds \|\check{w}^+_\ep\|_{L^{2^*}(Q^+)}\leq C,
\end{equation}
with $C$ positive constant independent of $\ep$. 

One has
\begin{equation}\label{exint1}
\begin{array}{c}
\ds \int_{Q^+}\left|\check{w}^+_\ep- w^+\right|^2\,dx=\int_{Q^+}\left|\chi_{Q^+\setminus S_\ep}\check{w}^+_\ep+\chi_{S_\ep}\check{w}^+_\ep- w^+\right|^2\,dx
\\[5mm]
\displaystyle \leq C\left(\int_{Q^+}\left|\chi_{Q^+\setminus S_\ep}\check{w}^+_\ep- w^+\right|^2\,dx+\int_{Q^+}\left|\chi_{S_\ep}\check{w}^+_\ep\right|^2\,dx\right)
\\[5mm]
\ds \leq C\left(\int_{Q^+}\left|w^+_\ep- w^+\right|^2\,dx+\int_{Q^+}\left|\chi_{S_\ep}\check{w}^+_\ep\right|^2\,dx\right).
\end{array}
\end{equation}
In view of \eqref{convw}, the first integral on the right hand side of \eqref{exint1} converges to zero. Concerning the second integral, by H\"older inequality, we get
\begin{equation*}
\ds \int_{Q^+}\left|\chi_{S_\ep}\check{w}^+_\ep\right|^2\,dx\leq \left|S_\ep\right|^{\frac{1}{(2^*/2)'}}\left\|\check{w}^+_\ep\right\|^2_{L^{2^*}(Q^+)}.
\end{equation*}
Thus, this integral goes to zero too, in view of \eqref{se} and \eqref{exstima2}. This concludes the proof of \eqref{exconv1}. 

\smallskip

{\bf Step 2.}
Here we prove \eqref{k<1lim4}. 
\\
Let us observe that
\begin{equation}\label{exnorm1}
\begin{array}{c}
\ds \left\|w^+_\ep\left(\cdot,\ep^kg\left(\frac{\cdot}{\ep}\right)\right)-w^+\left(\cdot,0\right)\right\|_{L^2(\omega)}\\[5mm]
\ds\leq \left\|w^+_\ep\left(\cdot,\ep^kg\left(\frac{\cdot}{\ep}\right)\right)-w_\ep^+\left(\cdot,\ep^k\bar{g}\right)\right\|_{L^2(\omega)}+\displaystyle\left\|w_\ep^+\left(\cdot,\ep^k\bar{g}\right)-w^+\left(\cdot,0\right)\right\|_{L^2(\omega)}.
\end{array}
\end{equation}
By \eqref{CeFrPiat2}, the first term on the right hand side of \eqref{exnorm1} tends to zero. 
Let us prove that the second term converges to zero as well. One has
\begin{equation}\label{exnorm2}
\begin{array}{c}
\displaystyle\left\|w_\ep^+\left(\cdot,\ep^k\bar{g}\right)-w^+\left(\cdot,0\right)\right\|_{L^2(\omega)}\\[5mm]
\displaystyle\leq \left\|w_\ep^+\left(\cdot,\ep^k\bar{g}\right)-w^+\left(\cdot,\ep^k\bar{g}\right)\right\|_{L^2(\omega)}+\left\|w^+\left(\cdot,\ep^k\bar{g}\right)-w^+\left(\cdot,0\right)\right\|_{L^2(\omega)}.
\end{array}
\end{equation}
Let us first show that
\begin{equation}\label{exnorm3}
\begin{array}{c}
 \ds \left\|w_\ep^+\left(\cdot,\ep^k\bar{g}\right)-w^+\left(\cdot,\ep^k\bar{g}\right)\right\|_{L^2(\omega)}= \left\|\check{w}_\ep^+\left(\cdot,\ep^k\bar{g}\right)-w^+\left(\cdot,\ep^k\bar{g}\right)\right\|_{L^2(\omega)}\\[5mm]
\ds \leq \left\|\check{w}_\ep^+\left(\cdot,0\right)-w^+\left(\cdot,0\right)\right\|_{L^2(\omega)}+\left\|\check{w}_\ep^+\left(\cdot,\ep^k\bar{g}\right)-w^+\left(\cdot,\ep^k\bar{g}\right)-\check{w}_\ep^+\left(\cdot,0\right)+w^+\left(\cdot,0\right)\right\|_{L^2(\omega)}\to 0.
\end{array}
\end{equation}
Indeed, by  \eqref{stima}, \eqref{exconv1} and  \eqref{exstima1} the difference  $(w^+-\check{w}^+_\ep)$ converges to zero weakly in $H^1(Q^+)$. Thus, by the compactness of the trace it holds
\begin{equation}\label{exnorm31}
\ds \left\|\check{w}_\ep^+\left(\cdot,0\right)-w^+\left(\cdot,0\right)\right\|_{L^2(\omega)}\to 0,
\end{equation}
 up to a subsequence.

\noindent Furthermore, by \eqref{CeFrPiat3} applied to the function $(\check{w}^+_\ep-w^+)$, one also has
\begin{equation}\label{exnorm32}
\ds \left\|\check{w}_\ep^+\left(\cdot,\ep^k\bar{g}\right)-w^+\left(\cdot,\ep^k\bar{g}\right)-\check{w}_\ep^+\left(\cdot,0\right)+w^+\left(\cdot,0\right)\right\|_{L^2(\omega)}\to 0.
\end{equation}
Combining \eqref{exnorm31} and \eqref{exnorm32} we get \eqref{exnorm3}.

\noindent Moreover, by the continuity of trace arguments, as observed in \cite{donpiat}, we get
\begin{equation}\label{exnorm2*}
\ds \left\|w^+\left(\cdot,\ep^k\bar{g}\right)-w^+\left(\cdot,0\right)\right\|_{L^2(\omega)}\to 0.
\end{equation}

\noindent Putting together \eqref{exnorm2}, \eqref{exnorm3} and \eqref{exnorm2*} we obtain
\begin{equation}\label{exnorm2'}
\begin{array}{c}
\displaystyle\left\|w_\ep^+\left(\cdot,\ep^k\bar{g}\right)-w^+\left(\cdot,0\right)\right\|_{L^2(\omega)}\to 0.
\end{array}
\end{equation}
\noindent The previous arguments, together with \eqref{exnorm1} and \eqref{exnorm2'}, entail the  claimed convergence \eqref{k<1lim4}.

\smallskip

{\bf Step 3.} The proof of \eqref{k<1lim5} is easier since there is no need to use extension operators. Indeed,
\begin{equation}\label{exnorm4}
\begin{array}{c}
\displaystyle\left\|w^-_\ep\left(\cdot,\ep^kg\left(\frac{\cdot}{\ep}\right)\right)-w^-\left(\cdot,0\right)\right\|_{L^2(\omega)}\\[5mm]
\ds\leq \left\|w^-_\ep\left(\cdot,\ep^kg\left(\frac{\cdot}{\ep}\right)\right)-w_\ep^-\left(\cdot,0\right)\right\|_{L^2(\omega)}+\displaystyle\left\|w_\ep^-\left(\cdot,0\right)-w^-\left(\cdot,0\right)\right\|_{L^2(\omega)}.
\end{array}
\end{equation}
By \eqref{CeFrPiat1} it holds
\begin{equation}\label{exconv3}
\ds \left\|w^-_\ep\left(\cdot,\ep^kg\left(\frac{\cdot}{\ep}\right)\right)-w_\ep^-\left(\cdot,0\right)\right\|_{L^2(\omega)}\to 0.
\end{equation}
On the other hand, by \eqref{P}, \eqref{stima} and the compactness of the trace operator, we get
\begin{equation}\label{exconv2}
\ds \left\|w_\ep^-\left(\cdot,0\right)-w^-\left(\cdot,0\right)\right\|_{L^2(\omega)}\to 0,
\end{equation}
up to a subsequence.\\
From \eqref{exnorm4}, \eqref{exconv3} and \eqref{exconv2}, one obtains \eqref{k<1lim5}.

This concludes the proof of the claimed convergence results.
\end{proof}

\begin{remark}[\cite{donpiat}]\label{reminterface}
We observe that the interface integral in \eqref{probvar}, in the coordinates $x'$,  reads as follows
	$$
	\begin{array}{c}
	{\displaystyle \int_{\gae}h_2(u^+_\e - u^-_\e)(\varphi_{\e}^+-\varphi_{\e}^-)\, d\sigma}=
	\\[5mm]
{\displaystyle \int_{\omega}h_2\left(u^+_\e\left(x',\ep g\left(\frac{x'}{\ep}\right)\right) - u^-_\e\left(x',\ep g\left(\frac{x'}{\ep}\right)\right)\right)}\times\\[5mm]
 \displaystyle{\left(\varphi_{\e}^+\left(x',\ep g\left(\frac{x'}{\ep}\right)\right)-\varphi_{\e}^-\left(x',\ep g\left(\frac{x'}{\ep}\right)\right)\right)}\times {\displaystyle\sqrt{1+\e^{2(k-1)}(\left|\n_{y'} g(y')\right|^2)_{|y'=x'/ \ep}}\, dx'}.
	\end{array}
	$$
	\end{remark}
Due to Remark \ref{reminterface}, to study the limit behaviour of problem \eqref{probvar}, we need the following convergence result related only to the geometry of the interface and to the parameter $\g$.
\begin{proposition}\label{propg}
Under the assumption (\textbf{A}$_g$) it holds
\begin{equation*}
\ds \ep^{\gamma}\sqrt{1+\e^{2(k-1)}(\left|\n_{y'} g(y')\right|^2)_{|y'=x'/ \ep}}\, \to\, G\quad \text{ weakly * in }L^\infty(Y'), 
\end{equation*}
where
\begin{equation}\label{Gint}
\ds {G}=\left\{
\begin{array}{ll}
\ds\mathcal{M}_{Y'}\left(\sqrt{1+\left|\n_{y'} g(y')\right|^2}\right) &\hbox{ if } k=1 \hbox{ and } \g=0,\\[2mm]
\ds1 &\hbox{ if } k>1 \hbox{ and }  \g=0,\\[2mm]
\ds\mathcal{M}_{Y'}\left(\left|\n_{y'} g(y')\right|\right) &\hbox{ if } 0<k<1  \hbox{ and }  \g=1-k,
\end{array}	
\right.
\end{equation}
while for  (\,$k\geq 1  \hbox{ and }  \g>0$)  or (\,$0<k<1 \hbox{and }\g>1-k$),
\begin{equation}\label{0int}
\ds \ep^{\gamma}\sqrt{1+\e^{2(k-1)}(\left|\n_{y'} g(y')\right|^2)_{|y'=x'/ \ep}}\, \to\, 0\quad \text{ weakly * in }L^\infty(Y').
\end{equation}
\end{proposition}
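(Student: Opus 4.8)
The plan is to analyze the quantity
$$\ep^{\gamma}\sqrt{1+\e^{2(k-1)}\left(\left|\n_{y'} g(y')\right|^2\right)_{|y'=x'/\ep}}$$
by factoring out the dominant power of $\ep$ according to how $\e^{2(k-1)}$ compares with $1$ inside the square root, and then invoking the standard periodic weak-$*$ convergence result: if $\psi$ is $Y'$-periodic and bounded, then $\psi(x'/\ep)\rightharpoonup \mathcal{M}_{Y'}(\psi)$ weakly $*$ in $L^\infty(Y')$. Since $g$ is Lipschitz by assumption $(\textbf{A}_g)$, the function $y'\mapsto |\n_{y'}g(y')|$ is in $L^\infty(Y')$ and $Y'$-periodic, so all the averages appearing in \eqref{Gint} are well defined and finite. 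The whole proof reduces to identifying, in each regime of $k$, the correct leading-order behaviour of the square root and matching it against the prefactor $\ep^{\gamma}$.

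First I would treat the self-similar case $k=1$. Here $\e^{2(k-1)}=1$, so the square root is exactly $\sqrt{1+|\n_{y'}g|^2}$ evaluated at $y'=x'/\ep$, and with $\gamma=0$ the prefactor is $1$. Applying the periodic weak-$*$ convergence to the bounded periodic function $\sqrt{1+|\n_{y'}g|^2}$ gives directly the first line of \eqref{Gint}. Next, for $k>1$, I would observe that $\e^{2(k-1)}\to 0$, so the integrand converges uniformly (even pointwise a.e., uniformly in $x'$ since $|\n g|$ is bounded) to the constant $1$; combined with $\gamma=0$ this yields the second line, the limit $G=1$. The slightly more delicate case is $0<k<1$: now $\e^{2(k-1)}\to\infty$, so I factor $\e^{k-1}$ out of the square root, writing
$$\sqrt{1+\e^{2(k-1)}|\n_{y'}g|^2}=\e^{k-1}\sqrt{\e^{2(1-k)}+|\n_{y'}g|^2}.$$
With $\gamma=1-k$, the prefactor $\ep^{\gamma}\e^{k-1}=1$ cancels exactly, and it remains to show that $\sqrt{\e^{2(1-k)}+|\n_{y'}g|^2}$ converges weakly $*$ to $\mathcal{M}_{Y'}(|\n_{y'}g|)$. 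For this I would note that the difference between $\sqrt{\e^{2(1-k)}+|\n_{y'}g(x'/\ep)|^2}$ and $|\n_{y'}g(x'/\ep)|$ is bounded in $L^\infty$ by $\e^{1-k}\to 0$ (using $\sqrt{a+b}-\sqrt{b}\le\sqrt{a}$), so the two sequences share the same weak-$*$ limit, which for the second one is $\mathcal{M}_{Y'}(|\n_{y'}g|)$ by the periodic convergence result. This gives the third line of \eqref{Gint}.

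For the boundary-destroying regimes leading to \eqref{0int}, I would repeat the same factorizations but now track that the surviving power of $\ep$ is strictly positive. Concretely, for $k\geq 1$ the square root stays bounded (between $1$ and $\sqrt{1+\|\n g\|_\infty^2}$ when $k=1$, and converges to $1$ when $k>1$), so the whole expression is $\ep^{\gamma}$ times an $L^\infty$-bounded sequence; since $\gamma>0$ forces $\ep^{\gamma}\to 0$, the product tends to $0$ strongly in $L^\infty$, hence a fortiori weakly $*$. For $0<k<1$, the same factorization as above gives the expression equal to $\ep^{\gamma+k-1}\sqrt{\e^{2(1-k)}+|\n_{y'}g|^2}$, where the square root is $L^\infty$-bounded (its limit is $|\n_{y'}g|$) and $\gamma+k-1>0$ by hypothesis, so again the prefactor kills everything and the limit is $0$.

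The main obstacle, though a mild one, is the case $0<k<1$ for $G$ in \eqref{Gint}: unlike the other regimes, here one cannot simply pass to the limit inside the square root, because the integrand blows up pointwise; the correct treatment requires the factorization that isolates the exact cancelling power $\ep^{\gamma}\e^{k-1}=1$ together with the $L^\infty$ comparison $|\sqrt{\e^{2(1-k)}+b^2}-|b||\le\e^{1-k}$ to replace the square root by the genuinely periodic function $|\n_{y'}g|$ without changing the weak-$*$ limit. Everything else is a routine application of the periodic weak-$*$ convergence theorem, for which the Lipschitz regularity in $(\textbf{A}_g)$ guarantees the required $L^\infty$ bounds.
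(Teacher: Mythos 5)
Your proposal is correct and follows essentially the same route as the paper: the case $k=1$ by direct periodic weak-$*$ convergence, the case $k>1$ by passing to the limit inside the square root, and the case $0<k<1$ via the identical factorization $\sqrt{1+\e^{2(k-1)}|\n_{y'}g|^2}=\e^{k-1}\sqrt{\e^{2(1-k)}+|\n_{y'}g|^2}$, with the prefactor $\e^{\gamma}$ either cancelling exactly or supplying a surviving positive power that forces the limit $0$. If anything, you supply a justification the paper leaves implicit, namely the comparison $\bigl|\sqrt{\e^{2(1-k)}+b^2}-|b|\bigr|\le\e^{1-k}$ showing that $\sqrt{\e^{2(1-k)}+|\n_{y'}g(x'/\e)|^2}$ and $|\n_{y'}g(x'/\e)|$ share the same weak-$*$ limit.
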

\begin{proof}
For $k=1$, one has
\begin{equation*}
\ds\sqrt{1+(\left|\n_{y'} g(y')\right|^2)_{|y'=x'/ \ep}}\, \to\, \ds\mathcal{M}_{Y'}\left(\sqrt{1+\left|\n_{y'} g(y')\right|^2}\right)\quad\text{ weakly * in }L^\infty(Y').
\end{equation*}
Therefore, for $\g=0$ we get the first condition in \eqref{Gint}, while for $\g>0$ we obtain \eqref{0int}.
\\
For $k>1$
\begin{equation*}
\ds \sqrt{1+\e^{2(k-1)}(\left|\n_{y'} g(y')\right|^2)_{|y'=x'/ \ep}}\, \to\, 1 \quad\text{ weakly * in }L^\infty(Y').
\end{equation*}
Thus, we easily get the second condition in \eqref{Gint} for $\g=0$ and \eqref{0int} for $\g>0$.
\\
\noindent On the other hand, for $0<k<1$, we can write
\begin{equation}\label{G1}
\ds \sqrt{1+\e^{2(k-1)}(\left|\n_{y'} g(y')\right|^2)_{|y'=x'/ \ep}}=\ep^{k-1}\sqrt{\e^{2(1-k)}+(\left|\n_{y'} g(y')\right|^2)_{|y'=x'/ \ep}}
\end{equation}  
where
\begin{equation}\label{G2}
\ds \sqrt{\e^{2(1-k)}+(\left|\n_{y'} g(y')\right|^2)_{|y'=x'/ \ep}}\, \to\, \mathcal{M}_{Y'}\left(\left|\n_{y'} g(y')\right|\right) \quad\text{ weakly * in }L^\infty(Y').
\end{equation}  
By combining \eqref{G1} and \eqref{G2} we get the third condition in \eqref{Gint} for $\g=1-k $ and \eqref{0int} for $\g>1-k$.
\end{proof}

\section{Existence and uniqueness result}\label{secexuniq}
Here, we aim to prove the existence and uniqueness of the solution of problem \eqref{probvar},  for any fixed $\ep$, together with some uniform a priori estimates. 

Let us start by proving that all the terms in \eqref{probvar} make sense.
\begin{proposition}\label{sense}
Under assumptions (\textbf{A$_g$}), $(\textbf{A}2)$ and $(\textbf{A}3)$, suppose that $r$ and $\bar{r}$ are such that
\begin{equation}
\left\{
\begin{array}{ll}
\ds i)\ \text{ if } N=2,\quad r,\,\bar{r} \in [1,+\infty),\\[2mm]
\ds ii)\ \text{ if } N>2,\quad r=\dfrac{2(N-1)}{N-2},\,\bar{r}=2^*=\dfrac{2N}{N-2},
\end{array}
\right.
\end{equation}
and 
\begin{equation}\label{cond1}
\ds r\geq q_2\,r',\qquad\bar{r}\geq q_1\bar{r}',
\end{equation}
where $r'$ and $\bar{r}'$ are the coniugate exponents of $r$ and $\bar{r}$, respectively.

Then, for any fixed $\ep$,  the following maps are bounded and continuous:
\begin{equation}\label{map}
\begin{array}{ll}
\ds i)\ H_1:w_\ep\in W_0^\ep\rightarrow H_1(w_\ep)\in L^{\frac{\bar{r}}{q_1}}(Q),
\\[2mm]
\ds ii)\ H_2:w_\ep\in W_0^\ep\rightarrow H_2(w_\ep)\in L^{\frac{r}{q_2}}(\gae),
\end{array}
\end{equation}
where $H_1(w_\ep)(x)=h_1(w_\ep(x))$, $H_2(w_\ep)(x)=h_2(w_\ep^+(x)-w_\ep^-(x))$.

Moreover for every $w_\ep$ and $v_\ep$ in $W_0^\ep$, we get
\begin{equation}\label{L1}
\left\{
\begin{array}{l}
\ds h_1(w_\ep)\,v_\ep\in L^1(Q),\\[2mm]
\ds h_2(w^+_\ep-w^-_\ep)(v^+_\ep-v^-_\ep)\in L^1(\gae).
\end{array}
\right.
\end{equation}
\end{proposition}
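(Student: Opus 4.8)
The plan is to reduce the entire statement to two continuous embeddings combined with the classical continuity theory for superposition (Nemytskii) operators between Lebesgue spaces; since $\ep$ is fixed throughout, I may freely use embedding constants depending on $\ep$. First I would record the two embeddings for which the exponents $\bar r$ and $r$ are tailored. Applying the Sobolev embedding to $w_\ep^\pm\in H^1(Q_\ep^\pm)$ and recombining the two pieces yields a continuous inclusion $W_0^\ep\hookrightarrow L^{\bar r}(Q)$, where $\bar r=2^*$ for $N>2$ and $\bar r$ is an arbitrary finite exponent for $N=2$; here the control of the $H^1$-norm of each piece by $\|w_\ep\|_{W_0^\ep}$ is provided by \eqref{P}. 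Since $g$ is Lipschitz, $\gae$ is a Lipschitz graph, so the trace operator maps $H^1(Q_\ep^\pm)$ continuously into $H^{1/2}(\gae)\hookrightarrow L^r(\gae)$ with $r=2(N-1)/(N-2)$ for $N>2$ (any finite $r$ for $N=2$); in particular the one-sided traces $w_\ep^\pm$, and hence the jump $w_\ep^+-w_\ep^-$, lie in $L^r(\gae)$.

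Boundedness of $H_1$ and $H_2$ then follows at once from the growth hypotheses. From \eqref{assg*} one has $|h_1(w_\ep)|^{\bar r/q_1}\le C(1+|w_\ep|^{q_1})^{\bar r/q_1}\le C(1+|w_\ep|^{\bar r})$, so integrating over $Q$ and using $w_\ep\in L^{\bar r}(Q)$ gives $h_1(w_\ep)\in L^{\bar r/q_1}(Q)$ with $\|h_1(w_\ep)\|_{L^{\bar r/q_1}(Q)}\le C(1+\|w_\ep\|_{W_0^\ep}^{q_1})$, which is the desired mapping-of-bounded-sets property. The argument for $H_2$ is identical after replacing \eqref{assg*} by \eqref{assgi}, $Q$ by $\gae$, $\bar r$ by $r$, $q_1$ by $q_2$, and $w_\ep$ by the jump of its traces.

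For the continuity of $H_1$ and $H_2$ I would invoke the standard fact that, for a continuous scalar function $h$ satisfying $|h(z)|\le C(1+|z|^q)$, the superposition operator $u\mapsto h(u)$ is bounded and continuous from $L^s$ into $L^{s/q}$ over a set of finite measure. Combined with the continuous embedding $W_0^\ep\hookrightarrow L^{\bar r}(Q)$, this gives continuity of $H_1$ from $W_0^\ep$ into $L^{\bar r/q_1}(Q)$; combined with the continuous jump-of-trace map $W_0^\ep\to L^r(\gae)$, it gives continuity of $H_2$ into $L^{r/q_2}(\gae)$.

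Finally, the $L^1$ memberships in \eqref{L1} are H\"older estimates in which \eqref{cond1} is exactly the needed compatibility. For the volume term, H\"older with the conjugate pair $\bar r/q_1$ and $(\bar r/q_1)'$ gives $\int_Q|h_1(w_\ep)v_\ep|\,dx\le \|h_1(w_\ep)\|_{L^{\bar r/q_1}(Q)}\,\|v_\ep\|_{L^{(\bar r/q_1)'}(Q)}$, and the inequality $\bar r\ge q_1\bar r'$ is equivalent to $(\bar r/q_1)'\le\bar r$, so that $v_\ep\in L^{\bar r}(Q)$ already lies in the required dual space; the interface term is treated identically using $r\ge q_2 r'$ together with the traces in $L^r(\gae)$. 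The only genuinely delicate point is the continuity assertion, which relies on the Nemytskii continuity theorem rather than on a direct estimate; everything else is bookkeeping verifying that the Sobolev and trace exponents match the growth exponents. Indeed, for $N>3$ the upper bounds imposed on $q_1$ and $q_2$ in $(\textbf{A}2)$ and $(\textbf{A}3)$ are precisely $\bar r-1$ and $r-1$, so that \eqref{cond1} holds up to the endpoint, while for $N\in\{2,3\}$ the hypotheses $q_1,q_2<2$ render \eqref{cond1} automatic.
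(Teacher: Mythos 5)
Your proof is correct and follows essentially the same route as the paper's: the continuous embeddings $H^1(Q_\ep^{\pm})\subset L^{\bar r}(Q_\ep^{\pm})$ and the jump-of-trace map into $L^r(\gae)$, composed with Nemytskii operator boundedness and continuity, and H\"older's inequality with \eqref{cond1} (equivalently $(\bar r/q_1)'\le \bar r$ and $(r/q_2)'\le r$) for the $L^1$ memberships. Your added details --- the explicit growth estimate for boundedness, the $H^{1/2}(\gae)$ justification of the trace exponent, and the endpoint check $q_1\le \bar r-1$, $q_2\le r-1$ --- are correct elaborations of steps the paper leaves implicit.
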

\begin{proof}
Let us consider the lower order term. If $N>2$, the embeddings $H^1(Q_\ep^{\pm})\subset L^{2^*}(Q_\ep^{\pm})$  are continuous. Hence, we get $w_\ep\in L^{2^*}(Q)$. By $(\textbf{A}2)_{1,3}$ and the property of the Nemytskii operator associated to the function $h_1$ we get that $H_1(w_\ep)(x)=h_1(w_\ep(x))$ is a bounded and continuous map from $W_0^\ep$ to $L^{\frac{2^*}{q_1}}(Q)$. By \eqref{assqi} the first relation in \eqref{L1} holds for $N>2$.
For $N=2$ we argue in a similar way as above by using the continuous embeddings $H^1(Q_\ep^{\pm})\subset L^{\bar{r}}(Q_\ep^{\pm})$  for any $\bar{r}\in[1,+\infty)$ and again \eqref{assqi} for $N=2$. 

For the interface term, let 
$$ j: w_\ep\in W^\ep_0\,\to\, w_\ep^+-w_\ep^-\in L^r(\gae) $$
and
$$ j_2: \overline{w}_\ep\in L^r(\gae)\,\to\, j_2(\overline{w}_\ep)\in L^{\frac{r}{q_2}}(\gae) $$
be defined by $j_2(\overline{w}_\ep)(x)=h_2(\overline{w}_\ep(x))$. By the linear continuity of the inclusions $H^1(Q_\ep^+)\subset L^r(\gae)$ and  $H^1(Q_\ep^-)\subset L^r(\gae)$, $j$ is bounded and continuous. Moreover, assumption $(\textbf{A}3)_{1,4}$ and the property of the Nemytskii operator associated to the function $h_2$ imply that $j_2$ is bounded and continuous from $L^r(\gae)$ to $L^{\frac{r}{q_2}}(\gae)$. Hence, $H_2\left(w_\ep\right)=\left(j_2\circ j\right)(w_\ep)$ is a bounded and continuous map from $W^\ep_0$ into $L^{\frac{r}{q_2}}(\gae)$. Thus, by \eqref{asspi} and \eqref{cond1} the second relation in \eqref{L1} is true.
 \end{proof}
 
 At this point, we prove the main result of this section.
\begin{theorem}\label{exun}
Under assumptions $(\textbf{A}_g)$, $(\textbf{A}1)$, $(\textbf{A}2)$, $(\textbf{A}3)$ and given  $f\in L^2(Q)$,  for any fixed $\ep$, there exists a unique solution $u_\ep\in W^\ep_0$ of  problem \eqref{probvar} satisfying
\begin{equation}\label{estsol}
\left\{
\begin{array}{ll}
\ds i)\ \|\nabla u_\ep\|_{L^2(Q\setminus \gae)}\leq C,\\[2mm]
\ds ii)\ \|u^+_\ep-u^-_\ep\|_{L^2(\gae)}\leq C\ep^{-\frac{\gamma}{2}},\\[2mm]
\end{array}
\right.
\end{equation}
with $C$ positive constant independent of $\ep$.
\end{theorem}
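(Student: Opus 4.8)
The plan is to recast the variational problem \eqref{probvar} as an abstract operator equation in $W_0^\ep$ and to invoke the theory of monotone, coercive, hemicontinuous operators. I would define $A_\ep\colon W_0^\ep\to (W_0^\ep)'$ by
\[
\langle A_\ep w,\varphi\rangle=\int_{Q\setminus\gae}D^\ep\n w\,\n\varphi\,dx+\ep^\gamma\int_{\gae}h_2(w^+-w^-)(\varphi_\ep^+-\varphi_\ep^-)\,d\sigma+\int_Q h_1(w)\varphi\,dx,
\]
and let $F\in(W_0^\ep)'$ be $\langle F,\varphi\rangle=\int_Q f\varphi\,dx$, so that solving \eqref{probvar} means finding $u_\ep$ with $A_\ep u_\ep=F$. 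Proposition \ref{sense} guarantees that all three integrals are finite, that $A_\ep$ is well defined and bounded on $W_0^\ep$, and Cauchy--Schwarz together with \eqref{P} shows that $F$ is a bounded linear functional.

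Next I would verify the three structural properties feeding the Browder--Minty theorem. For strict monotonicity, given $w,v\in W_0^\ep$ the linear part contributes $\int_{Q\setminus\gae}D^\ep\n(w-v)\n(w-v)\,dx\geq\alpha\|\n(w-v)\|^2_{L^2(Q\setminus\gae)}$ by $(\textbf{A}1)$, while both nonlinear parts are nonnegative because $h_1$ and $h_2$ are increasing, so $(h_i(a)-h_i(b))(a-b)\geq0$, the increment relevant for $h_2$ being $(w-v)^+-(w-v)^-$; since $\|\cdot\|_{W_0^\ep}$ is a genuine norm through \eqref{P}, this gives $\langle A_\ep w-A_\ep v,w-v\rangle\geq\alpha\|w-v\|^2_{W_0^\ep}$. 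For coercivity, testing against $w$ itself, the first term is $\geq\alpha\|w\|^2_{W_0^\ep}$, the interface term is $\geq0$ by $(\textbf{A}3)_3$ and the lower order term is $\geq0$ by Remark \ref{remfi}, whence $\langle A_\ep w,w\rangle\geq\alpha\|w\|^2_{W_0^\ep}$. Hemicontinuity, i.e.\ the continuity of $t\mapsto\langle A_\ep(w+tv),\varphi\rangle$ on $\R$, follows from the continuity of the Nemytskii maps $H_1$ and $H_2$ of Proposition \ref{sense} and the continuity of the trace embeddings. These properties yield existence, and strict monotonicity yields uniqueness, since two solutions would satisfy $0=\langle A_\ep u_\ep-A_\ep v_\ep,u_\ep-v_\ep\rangle\geq\alpha\|u_\ep-v_\ep\|^2_{W_0^\ep}$.

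For the a priori estimates I would test \eqref{probvar} with $\varphi=u_\ep$, obtaining
\[
\int_{Q\setminus\gae}D^\ep\n u_\ep\,\n u_\ep\,dx+\ep^\gamma\int_{\gae}h_2(u_\ep^+-u_\ep^-)(u_\ep^+-u_\ep^-)\,d\sigma+\int_Q h_1(u_\ep)u_\ep\,dx=\int_Q f u_\ep\,dx.
\]
Bounding the first term below by $\alpha\|\n u_\ep\|^2_{L^2(Q\setminus\gae)}$, the second by $C\ep^\gamma\|u_\ep^+-u_\ep^-\|^2_{L^2(\gae)}$ via $(\textbf{A}3)_3$, and the third by $0$ via Remark \ref{remfi}, while estimating the right-hand side by $\|f\|_{L^2(Q)}\|u_\ep\|_{L^2(Q)}\leq C\|f\|_{L^2(Q)}\|\n u_\ep\|_{L^2(Q\setminus\gae)}$ through \eqref{P}, one absorbs the gradient norm to obtain estimate $i)$; feeding this bound back into the inequality controls $\ep^\gamma\|u_\ep^+-u_\ep^-\|^2_{L^2(\gae)}$ by a constant, which after dividing by $\ep^\gamma$ is estimate $ii)$.

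The sign computations are routine; the point demanding genuine care is the uniformity of the constants in $\ep$. This is secured precisely because the ellipticity constant $\alpha$ in $(\textbf{A}1)$ and the Poincar\'e constant in \eqref{P} are both independent of $\ep$, so the absorbed estimate $i)$ carries no hidden $\ep$-dependence. The only other delicate step is hemicontinuity under the growth restrictions \eqref{assqi} and \eqref{asspi}, which is exactly where the \emph{continuity} statements of Proposition \ref{sense}, and not merely the boundedness of the associated Nemytskii operators, are indispensable.
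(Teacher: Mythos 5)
Your proposal is correct and takes essentially the same approach as the paper: both recast \eqref{probvar} as a monotone, coercive, (hemi)continuous operator equation solved via the Browder--Minty theorem (using Proposition \ref{sense} for well-posedness of the nonlinear terms), and both obtain \eqref{estsol} by testing with $u_\ep$ and invoking $(\textbf{A}1)$, $(\textbf{A}3)_3$, Remark \ref{remfi} and the $\ep$-uniform Poincar\'e inequality \eqref{P}. The only cosmetic difference is that you deduce uniqueness directly from strong monotonicity of the operator, whereas the paper subtracts the two weak formulations --- the same computation in a different guise.
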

\begin{proof} The proof follows the same arguments of Theorem 3.1 in \cite{donngu}.  We start proving, for any fixed $\ep$, the existence of a solution of the variational problem \eqref{probvar} by using the Minty-Browder theorem for the operator $\mathcal{G}:u_\ep\in W^\ep_0\to \mathcal{G}(u_\ep)\in(W^\ep_0)'$ where
\begin{equation}\label{funG}
	\ds \langle\mathcal{G}\left(u_\ep\right),\varphi_\ep\rangle_{(W^\ep_0)',W^\ep_0}=
	{\displaystyle \int_{Q \setminus \gae} D^\ep \n u_\ep \n \varphi_\ep\, dx}+ {\displaystyle \int_{\gae}h_2(u_\ep^+- u_\ep^-)(\varphi_\ep^+-\varphi_\ep^-)\, d\sigma}+ {\displaystyle \int_{Q} h_1(u_\ep) \varphi_\ep\, dx}
	-{\displaystyle \int_{Q} f \varphi_\ep\, dx}.
	\end{equation}
The continuity of $\mathcal{G}$ follows directly from assumption $(\textbf{A}1)$ and Proposition \ref{sense}. Moreover hypotheses $(\textbf{A}1)$, $(\textbf{A}2)_2$ and $(\textbf{A}3)_2$,  assure the monotonicity of $\mathcal{G}$. It remains to prove the coercivness of $\mathcal{G}$. To this aim let us observe that  by $(\textbf{A}1)$, $(\textbf{A}3)_3$, Remark \ref{remfi}, \eqref{normwe0} and \eqref{P}, it holds 
\begin{equation}\label{coeG}
\begin{array}{c}
\ds\langle\mathcal{G}\left(u_\ep\right),u_\ep\rangle\geq \alpha\left\|\n u_\ep\right\|^2_{L^2(Q\setminus \gae)}+C\left\|u_\ep^+-u_\ep^-\right\|^2_{L^2(\gae)}-\|f\|_{L^2(Q)}\|u_\ep\|_{L^2(Q)}\\[2mm]
\ds \geq C_1\|u_\ep\|^2_{W^\ep_0}-C_2\|u_\ep\|_{W^\ep_0},
\end{array}
\end{equation}
with $C_1$ and $C_2$ positive constants. The above inequality yields that the operator $\mathcal{G}$ is coercive. Hence, by the Minty-Browder theorem, $\mathcal{G}$ is surjective, so that there exists at least one function $u_\ep\in W^\ep_0$ such that $\mathcal{G}(u_\ep)=0$. This means that $u_\ep$ is a solution of problem \eqref{probvar}.
As for the uniqueness, let us suppose that problem  \eqref{probvar} admits another solution $\bar{u}_\ep$. Choosing as test function $\varphi_\ep=u_\ep-\bar{u}_\ep$ in \eqref{probvar} written in correspondence of the solutions $u_\ep$ and $\bar{u}_\ep$, respectively, we get
\begin{equation*} 
	\begin{array}{c}
	{\displaystyle \int_{Q \setminus \gae} D^\ep \n u_\ep \n (u_\ep-\bar{u}_\ep)\, dx}+
	{\displaystyle \int_{\gae}h_2(u_\ep^+- u_\ep^-)((u_\ep^+-\bar{u}_\ep^+)-(u_\ep^--\bar{u}_\ep^-))\, d\sigma}\\[5mm] 
{\displaystyle +\int_{Q} h_1(u_\ep) (u_\ep-\bar{u}_\ep)\, dx}={\displaystyle \int_{Q} f (u_\ep-\bar{u}_\ep)\, dx}.
	\end{array}
	\end{equation*}
	and
\begin{equation*} 
	\begin{array}{c}
	{\displaystyle \int_{Q \setminus \gae} D^\ep \n \bar{u}_\ep\n (u_\ep-\bar{u}_\ep)\, dx}
	{\displaystyle \int_{\gae}h_2(\bar{u}_\ep^+- \bar{u}_\ep^-)((u_\ep^+-\bar{u}_\ep^+)-(u_\ep^--\bar{u}_\ep^-))\, d\sigma}\\[5mm] 
  {\displaystyle +\int_{Q} h_1(\bar{u}_\ep) (u_\ep-\bar{u}_\ep)\, dx}= {\displaystyle \int_{Q} f (u_\ep-\bar{u}_\ep)\, dx}
  	\end{array}
	\end{equation*}
Then, we subtract and obtain
\begin{equation}\label{uniq} 
	\begin{array}{l}
	{\displaystyle \int_{Q \setminus \gae} D^\ep (\n u_\ep-\n \bar{u}_\ep) (\n u_\ep-\n \bar{u}_\ep)\, dx}\\[5mm]
	{\displaystyle +\int_{\gae}\left[h_2(u_\ep^+- u_\ep^-)-h_2(\bar{u}_\ep^+- \bar{u}_\ep^-)\right]((u_\ep^+-\bar{u}_\ep^+)-(u_\ep^--\bar{u}_\ep^-))\, d\sigma}\\[4mm] 
+{\displaystyle \int_{Q}\left[ h_1(u_\ep)-h_1(\bar{u}_\ep)\right] (u_\ep-\bar{u}_\ep)\, dx}=0.
	\end{array}
	\end{equation}
By assumption $(\textbf{A}1)$, the monotonicity of $h_1$ and $h_2$, equality \eqref{uniq} gives
\begin{equation*}
\ds \n (u_\ep-\bar{u}_\ep)=0 \quad \text{ in }Q\setminus \gae.
\end{equation*}
Then, by using Poincar\'{e} inequality \eqref{P} we get $u_\ep=\bar{u}_\ep$ which ends the proof of uniqueness.

In order to prove uniform estimates in \eqref{estsol}, let us take $u_\ep$ as test function in \eqref{probvar} and obtain
\begin{equation*}
	{\displaystyle \int_{Q \setminus \gae} D^\e \n u_\e \n u_\e\, dx}+ \e^{\gamma} {\displaystyle \int_{\gae}h_2(u^+_\e - u^-_\e)(u_\e^+-u_\e^-)\, d\sigma}+ {\displaystyle \int_{Q} h_1(u_\e) u_\e\, dx}={\displaystyle \int_{Q} fu_\e\, dx}.
\end{equation*}
By $(\textbf{A}1)$, $(\textbf{A}3)_3$ and Remark \ref{remfi} we get
\begin{equation*}
\begin{array}{c}
\ds \alpha\|\n u_\ep\|^2_{L^2(Q\setminus \gae)}+C\,\e^{\gamma}\|u^+_\e - u^-_\e\|^2_{L^2(\gae)}\leq \|f\|_{L^2(Q)}\|u_\ep\|_{L^2(Q)},
\end{array}
\end{equation*}
which implies the claimed result in view of \eqref{P}.
\end{proof}

\section{Homogenization}\label{sechomo}
Let us start with some preliminary convergence results for the sequence of solutions of problem \eqref{probvar} needed for the proof of our main result.
They follow by \eqref{normwe0}, Proposition \ref{propdonpiat1} and the a priori estimate \eqref{estsol}$_{i}$ in Theorem \ref{exun}.
\begin{corollary}\label{corcomp}
Let $u_\ep$ be a solution of problem \eqref{probvar}. Then, there exist a subsequence (still denoted $\ep$) and a function $u\in W^0_0$ such that
\begin{equation*}
		\left\{\begin{array}{lll}
		\ds u_{\e}\rw u &\text{ strongly in } L^2(Q) \text{ and a.e. in }Q,\\[2mm]
		\ds \chi\strut_{Q^+_\e}\n u_{\e} \ru \chi\strut_{Q^+} \n u &\text{ weakly in }(L^2(Q))^N,\\[2mm]
		\ds \chi\strut_{Q^-_\e}\n u_{\e} \ru \chi\strut_{Q^-} \n u &\text{ weakly in }(L^2(Q))^N.
		\end{array}\right.
		\end{equation*}
\end{corollary}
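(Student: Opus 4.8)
The plan is to verify that the family of solutions $\{u_\ep\}$ satisfies the uniform bound needed to invoke the abstract compactness result, and then to apply that result verbatim. This is the only content of the statement, so the argument will be short.

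First I would recall that, by Theorem \ref{exun}, for every fixed $\ep$ problem \eqref{probvar} admits a unique solution $u_\ep\in W_0^\ep$, and that the a priori estimate \eqref{estsol}$_i$ provides
$$\|\n u_\ep\|_{L^2(Q\setminus\gae)}\leq C,$$
with $C>0$ independent of $\ep$. By the very definition of the norm of $W_0^\ep$ recalled in \eqref{normwe0}, the left-hand side above coincides with $\|u_\ep\|_{W_0^\ep}$, whence
$$\|u_\ep\|_{W_0^\ep}\leq C.$$
This is precisely the uniform bound \eqref{stima} assumed in the hypotheses of Proposition \ref{propdonpiat1}.

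It then remains to apply Proposition \ref{propdonpiat1} to the family $w_\ep=u_\ep$. This yields a subsequence (still denoted $\ep$) and a limit function $u\in W_0^0$ for which the convergences \eqref{convw} hold, namely the strong convergence of $u_\ep$ to $u$ in $L^2(Q)$ together with the two weak convergences of $\chi_{Q^\pm_\ep}\n u_\ep$ to $\chi_{Q^\pm}\n u$ in $(L^2(Q))^N$. These are exactly the three convergences in the statement.

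Since the corollary is a direct consequence of Theorem \ref{exun} and Proposition \ref{propdonpiat1}, I do not expect any genuine obstacle. The only point worth spelling out is the identification, through \eqref{normwe0}, of the gradient bound \eqref{estsol}$_i$ with the $W_0^\ep$-norm bound \eqref{stima}; everything else is an immediate transcription of the compactness result.
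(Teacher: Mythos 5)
Your proof is correct and follows exactly the paper's route: the paper derives the corollary from the a priori estimate \eqref{estsol}$_i$, the norm identification \eqref{normwe0}, and Proposition \ref{propdonpiat1}, precisely as you do. Nothing is missing.
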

\begin{remark}\label{rempro}
Since $\chi_{Q^+_\ep}$ and $\chi_{Q^+_\ep}$ converge a.e. to $\chi_{Q^+}$ and $\chi_{Q^-}$, respectively, as an immediate consequence of Corollary \ref{corcomp} one has 
\begin{equation*}
\left\{
\begin{array}{ll}
\ds \chi_{Q^+_\ep}u_\ep\rw \chi_{Q^+}u & \text{ strongly in }L^2(Q),\\[2mm]
\ds \chi_{Q^-_\ep}u_\ep\rw \chi_{Q^-}u & \text{ strongly in }L^2(Q).
\end{array}
\right.
\end{equation*}
 \end{remark}
By arguing as in Proposition 3.1 of \cite{donpiat}, we get the convergence result below.
\begin{corollary}\label{prophomou} Let $k>0$ and $u_\ep$ be the solution of problem \eqref{probvar}. Then, there exists a subsequence (still denoted $\ep$) such that
\begin{equation}
\left\{
\begin{array}{lll}
\ds i)\ \chi_{Q^+_\ep} D^\ep \n u_\ep \ru \chi_{Q^+}D^0\n u & \text{ weakly in }(L^2(Q))^N,\\[2mm]
\ds ii)\ \chi_{Q^-_\ep} D^\ep \n u_\ep \ru \chi_{Q^-}D^0\n u & \text{ weakly in }(L^2(Q))^N.
\end{array}
\right.
\end{equation}
\end{corollary}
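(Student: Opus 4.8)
The plan is to adapt Tartar's oscillating test function method (the energy method), exactly in the spirit of Proposition 3.1 of \cite{donpiat}. I would identify the weak limits of the two fluxes separately; I describe the argument for $i)$, the one for $ii)$ being identical with $Q^+$ replaced by $Q^-$.

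\textbf{Step 1 (compactness of the flux).} By assumption $(\textbf{A}1)$ one has $|D^\ep\n u_\ep|\leq\beta|\n u_\ep|$, so the a priori bound \eqref{estsol}$_i$ shows that $\chi_{Q^+_\ep}D^\ep\n u_\ep$ is bounded in $(L^2(Q))^N$; up to a subsequence it converges weakly to some $\xi^+\in(L^2(Q))^N$. Testing against $\varphi\in C_c^\infty(Q^-)$ and noting that $\mathrm{supp}\,\varphi\cap Q^+_\ep=\emptyset$ for $\ep$ small enough (since $S_\ep$ shrinks to $\Gamma_0$ by \eqref{se}), one gets $\xi^+=0$ on $Q^-$, hence $\xi^+=\chi_{Q^+}\xi^+$. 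It then remains to identify $\xi^+=\chi_{Q^+}D^0\n u$ on $Q^+$.

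\textbf{Step 2 (corrector and cancellation).} For $\lambda\in\R^N$ set $w_\lambda^\ep(x)=\ep\,\omega_\lambda(x/\ep)$, with $\omega_\lambda$ the solution of the cell problem \eqref{aux}. Then $w_\lambda^\ep\to\lambda\cdot x$ strongly in $L^2(Q)$, $\chi_{Q^+_\ep}D^\ep\n w_\lambda^\ep\ru\chi_{Q^+}D^0\lambda$ weakly in $(L^2(Q))^N$ by \eqref{hommat}, and $\dive(D^\ep\n w_\lambda^\ep)=0$ in $Q$. Fix $\varphi\in C_c^\infty(Q^+)$; for $\ep$ small $\mathrm{supp}\,\varphi\subset Q^+_\ep$, so $\varphi w_\lambda^\ep\in W_0^\ep$ has zero jump across $\gae$ and the interface integral in \eqref{probvar} drops out. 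Using $\varphi w_\lambda^\ep$ as a test function in \eqref{probvar}, and separately testing $\dive(D^\ep\n w_\lambda^\ep)=0$ against $\varphi u_\ep$, I subtract the two identities. By the symmetry of $D$ (whence $D^0$ is symmetric as well), the crossing terms $\int\varphi\,D^\ep\n u_\ep\cdot\n w_\lambda^\ep$ and $\int\varphi\,D^\ep\n w_\lambda^\ep\cdot\n u_\ep$ cancel, so that only products of one weakly convergent factor with one strongly convergent factor survive.

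\textbf{Step 3 (passage to the limit).} Passing to the limit by weak--strong convergence ($u_\ep\to u$ and $w_\lambda^\ep\to\lambda\cdot x$ strongly in $L^2$, against the weak limits $\xi^+$ and $\chi_{Q^+}D^0\lambda$), together with the convergence $h_1(u_\ep)\to h_1(u)$ coming from Corollary \ref{corcomp} and the growth bound \eqref{assg*}, yields an integral identity in $\xi^+,u,D^0\lambda,h_1(u),f$. Combining it with the limit equation $\dive\xi^+=h_1(u)-f$ in $Q^+$ (obtained by testing \eqref{probvar} against $\varphi\in C_c^\infty(Q^+)$ and letting $\ep\to0$) and integrating by parts, all the terms involving $\lambda\cdot x$ collapse and one is left with $\int_{Q^+}\varphi\,\xi^+\cdot\lambda\,dx=\int_{Q^+}\varphi\,(D^0\n u)\cdot\lambda\,dx$ for every $\varphi\in C_c^\infty(Q^+)$ and every $\lambda\in\R^N$. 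Hence $\xi^+=D^0\n u$ a.e. in $Q^+$, i.e. $\chi_{Q^+_\ep}D^\ep\n u_\ep\ru\chi_{Q^+}D^0\n u$, and the symmetric argument in $Q^-$ gives $ii)$.

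The linear homogenization mechanism here is classical, so the main obstacles are geometric and nonlinear rather than structural: one must guarantee that the interface contribution vanishes and that the lower order term passes to the limit. The former is secured by the shrinking of $S_\ep$, which lets interior test functions $\varphi\in C_c^\infty(Q^\pm)$ avoid $\gae$ for $\ep$ small; the latter relies on the strong $L^2$ and a.e. convergence $u_\ep\to u$ from Corollary \ref{corcomp} together with the subcritical growth \eqref{assg*}, which yields equi-integrability of $h_1(u_\ep)$ and hence convergence of the relevant integrals.
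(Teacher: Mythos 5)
Your proof is correct and follows essentially the same route as the paper: the paper's ``proof'' of this corollary consists precisely in invoking the oscillating test function (energy) argument of Proposition 3.1 in \cite{donpiat}, which is the Tartar scheme you reconstruct --- localization of test functions away from $\Gamma^\varepsilon$ (so the jump term drops, using that $g>0$ and $S_\varepsilon$ shrinks), correctors $\varepsilon\,\omega_\lambda(x/\varepsilon)$ from the cell problem \eqref{aux}, cancellation of the crossed terms by symmetry of $D$, and weak--strong identification of the flux limit via the already established local limit equation. The only point worth polishing is the lower order term in the critical case $q_1=2^*/(2^*)'$, where the growth bound \eqref{assg*} plus a.e.\ convergence give only weak $L^{(2^*)'}(Q)$ convergence of $h_1(u_\varepsilon)$, which must then be paired with the strong $L^{2^*}$ convergence of $\varphi\, w_\lambda^\varepsilon$ (valid since $\omega_\lambda-\lambda\cdot y\in H^1_{per}(Y)\subset L^{2^*}(Y)$), rather than with mere equi-integrability.
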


\subsection{Proof of Theorem \ref{theo:homores2}}
 At first we consider cases  A) and  B). To this aim, observe that given $\varphi\in W^0_0$ there exist $\psi_1$ and $\psi_2$ in $H^1_0(Q)$ such that $\varphi^+=\psi_{1|Q^+}$ and $\varphi^-=\psi_{2|Q^-}$. 
\\
Set $\varphi_\ep=\chi_{Q^+_\ep}\psi_{1}+ \chi_{Q^-_\ep}\psi_{2}$, one has that $\varphi_\ep \in W^\ep_0$. By construction and using \eqref{convchi}, it holds
	\begin{equation}\label{convtest}
		\ds \varphi_{\e}\rw \varphi \quad \text{ strongly in } L^{2^*}(Q).
		\end{equation}
Now, let us take  $\varphi_\ep$ as test function in the variational equality \eqref{probvar}. We get
\begin{equation}\label{homovar}
	{\displaystyle \int_{Q \setminus \gae} D^\e \n u_\e \n \varphi_{\ep}\, dx}+ \e^{\gamma} {\displaystyle \int_{\gae}h_2(u^+_\e - u^-_\e)(\varphi_{\ep}^+-\varphi_{\ep}^-)\, d\sigma}+{\displaystyle \int_{Q} h_1(u_\e) \varphi_{\ep}\, dx}=
{\displaystyle \int_{Q} f\varphi_{\ep}\, dx}.
	\end{equation}
We would like to pass to the limit in \eqref{homovar}. 

By Corollary \ref{prophomou}, we have, up to a subsequence, 
\begin{equation}\label{k=1lim1}
\begin{array}{c}
\displaystyle \int_{Q \setminus \gae} D^\e \n u_\e \n \varphi_{\ep}\, dx=\displaystyle \int_{Q} \chi_{Q^+_\ep}D^\e \n u_\e \n\psi_{1}\, dx+\int_{Q} \chi_{Q^-_\ep}D^\e \n u_\e \n\psi_{2}\, dx \\[5mm]
\displaystyle\rightarrow \int_{Q} \chi_{Q^+}D^0 \n u\n\psi_{1}\, dx +\int_{Q} \chi_{Q^-}D^0 \n u \n\psi_{2}\, dx=\int_{Q\setminus\Gamma_0}D^0\n u\n \varphi\,dx.
\end{array}
\end{equation}

Moreover, \eqref{convtest} gives
\begin{equation}\label{k=1lim3}
\ds \int_{Q} f \varphi_\ep\, dx \ \rw\ \int_{Q} f \varphi\, dx.
\end{equation}

In order to study the other two terms, at first, let us consider more regular functions $\psi_1$ and $\psi_2$ in $\mathcal{D}(\Omega)$. Hence, \eqref{convtest} becomes
\begin{equation}\label{convtestbis}
		\ds \varphi_{\e}\rw \varphi\quad \text{ strongly in } L^p(Q), \ \text{ for any }p\geq 1.
		\end{equation}

Concerning the lower order term, by $(\textbf{A}2)_{1,3}$, Corollary \ref{corcomp},  the properties of the Nemytskii operator associated to function $h_1$ and \eqref{convtestbis} written for $p=\left(\frac{2}{q_1}\right)'$, we get,  up to a subsequence, \bk
\begin{equation}\label{k=1lim2}
\ds \int_{Q} h_1(u_\ep) \varphi_{\ep}\, dx\ \rightarrow\ \int_{Q}h_1(u)\varphi\,dx.
\end{equation}

Now, let us treat the interface integral. By Remark  \ref{reminterface}, we have that 
\begin{equation}\label{sur1}
\begin{array}{c}
\ds \ep^\g{\displaystyle \int_{\gae}h_2(u^+_\e-u^-_\e)(\varphi_{\ep}^+-\varphi_{\ep}^-)\, d\sigma}\\[5mm]
{\displaystyle =\ep^{\g} \int_{\omega}h_2\left(u^+_\e\left(x',\ep g\left(\frac{x'}{\ep}\right)\right) - u^-_\e\left(x',\ep g\left(\frac{x'}{\ep}\right)\right)\right)}\times\\[5mm]
 \displaystyle{\left(\psi_{1}\left(x',\ep g\left(\frac{x'}{\ep}\right)\right)-\psi_{2}\left(x',\ep g\left(\frac{x'}{\ep}\right)\right)\right)}\times {\displaystyle\sqrt{1+\ep^{2(k-1)}(\left|\n_{y'} g(y')\right|^2)_{|y'=x'/ \ep}}\, dx'.}
\end{array}
\end{equation}
In view of \eqref{estsol}$_i$, by Proposition \ref{propdonpiat2} one has,  up to a subsequence, \bk
\begin{equation}\label{convtrace}
\ds u^{\pm}_\e\left(\cdot,\ep g\left(\frac{\cdot}{\ep}\right)\right)\ \rw\ u^{\pm}(\cdot,0)\, \text{ strongly in } L^2(\omega).
\end{equation}
Thus, by $({\bf A}3)_{1,4}$, \eqref{convtrace} and again the properties of the Nemytskii operator associated to $h_2$, we get
\begin{equation}\label{convfh}
\ds h_2\left( u^+_\e\left(\cdot,\ep g\left(\frac{\cdot}{\ep}\right)\right) -  u^-_\e\left(\cdot,\ep g\left(\frac{\cdot}{\ep}\right)\right)\right)\ \rw\ h_2\left(u^+(\cdot,0)-u^-(\cdot,0)\right) \text{ strongly in } L^{\frac{2}{q_2}}(\omega).
\end{equation}
On the other hand, in view of Lemma \ref{lemCeFrPiat} and since $\psi_i\in\mathcal{D}(Q)$, for every $p\geq 1$  we get
\begin{equation}\label{estpsi1}
\ds \left\|\psi_i\left(x',\ep g\left(\frac{x'}{\ep}\right)\right)-\psi_i\left(x',0\right)\right\|_{L^p(\omega)}\leq C\ep^{\frac{k}{p}}\|\psi_i\|_{W^{1,p}(Q)},
\end{equation}
i.e. 
\begin{equation}\label{estpsi1'}
\ds\psi_i\left(x',\ep g\left(\frac{x'}{\ep}\right)\right)\ \to\ \psi_i\left(x',0\right)\quad \text{ strongly in  } L^p(\omega), 
\end{equation}
for any fixed $i\in \{1,2\}$.

Hence, we are able to pass to the limit  in \eqref{sur1} obtaining different convergence results according to the values of $k$ and $\g$. 

By Proposition \ref{propg},  \eqref{sur1}, \eqref{convfh} and \eqref{estpsi1'} written for $p=\left(\frac{2}{q_2}\right)'$, in case A) we have,  up to a subsequence, \bk 
 \begin{equation}\label{k<1lim6'}
{\displaystyle \ep^{\gamma} \int_{\gae}h_2(u^+_\e-u^-_\e)(\varphi_{\ep}^+-\varphi_{\ep}^-)\, d\sigma }\\[2mm]
\rw 
\displaystyle  G \; \int_{\omega}h_2(u^+\left(x',0\right)-u^-\left(x',0\right))(\psi_1\left(x',0\right)-\psi_2\left(x',0\right))\, d x', 
\end{equation}
for any $\psi_1$ and $\psi_2$ in $\mathcal{D}(Q)$, where $G$ is defined in \eqref{Gint}, while in case B) it holds
\begin{equation}\label{k<1lim6''}
{\displaystyle \ep^{\gamma} \int_{\gae}h_2(u^+_\e-u^-_\e)(\varphi_{\ep}^+-\varphi_{\ep}^-)\, d\sigma }\ \rw\ 0.
\end{equation}

Now, in both cases, we fix $i\in \{1,2\}$ and take $\psi_i\in H_0^1(Q)$. Then, there exists a sequence $\{{\psi_{i}}_n \} \subset \mathcal{D}(Q)$ such that 
\begin{equation*}
\ds {\psi_{i}}_n\rightarrow \psi_i\quad \text{ strongly in }H_0^1(Q),
\end{equation*}
which implies
\begin{equation}\label{density}
\ds {\psi_{i}}_n\rightarrow \psi_i\quad \text{ strongly in }L^p(Q),\text{ for any }p\in[1,2^*].
\end{equation}
Set $\varphi_n=\chi_{Q^+}{\psi_{1}}_n+\chi_{Q^-}{\psi_{2}}_n$, it holds
\begin{equation}\label{conf}
\ds \varphi_n\rightarrow \varphi\quad \text{ strongly in }L^{2^*}(Q).
\end{equation}
We observe that if $N>2$ the embeddings $H^1(Q^{\pm})\subset L^{2^*}(Q^{\pm})$  are continuous, which give $u\in L^{2^*}(Q)$. Therefore, by $(\textbf{A$2$})_{3}$ and \eqref{conf} we obtain 
\begin{equation}\label{k=1lim4}
\ds\int_{Q}h_1(u)\varphi_{n}\,dx\ \rightarrow\ \int_{Q}h_1(u)\varphi \,dx.
\end{equation}
For $N=2$ we argue as above using the continuous embeddings $H^1(Q^{\pm})\subset L^{\bar{r}}(Q^{\pm})$  for any $\bar{r}\in[1,+\infty)$ and again $(\textbf{A$2$})_{3}$. 

Regarding the interface term
$$
G\int_{\omega} h_2\left(u^+(x',0) - u^-(x',0)\right)({\psi_{1}}_n(x',0)-{\psi_{2}}_n(x',0))\, d x',
$$
the linear continuity of the inclusions $H^1(Q^{\pm})\subset L^r(\Gamma_0)$, where
\begin{equation*}
\left\{\begin{array}{ll}
\ds\hbox{ if }N=2, \quad  r\in [1,+\infty),\\[2mm]
\ds\hbox{ if } N>2,  \quad r=\dfrac{2(N-1)}{N-2},
\end{array}
\right.
\end{equation*}
yields that, for $i\in \{1,2\}$, ${\psi_{i}}_n|_{\Gamma_{0}}$ is in $L^r(\Gamma_0)$. Thus, ${\psi_{i}}_n(\cdot,0)$ belongs to $L^r(\omega)$.
\\
By $(\textbf{A$3$})_4$ we always have $r'\leq \dfrac{r}{q_2}$ since $\dfrac{r}{r'}=\dfrac{N}{N-2}$. Hence, we get
$$
\begin{array}{c}
 \displaystyle G\int_{\omega} h_2\left(u^+(x',0) - u^-(x',0)\right)({\psi_{1}}_n(x',0)-{\psi_{2}}_n(x',0))\, d x'\\[5mm]
\displaystyle \rightarrow\ G\int_{\omega} h_2\left(u^+(x',0) - u^-(x',0)\right)(\psi_{1}(x',0)-\psi_{2}(x',0))\, d x',
\end{array}
$$
for $\psi_1$ and $\psi_2$ in $H^1_0(Q)$ where $G$ is defined in \eqref{Gint} of Proposition \ref{propg}.

The above results allow us to have, by a density argument, the limit problems \eqref{prolimi1} and \eqref{prolimi2}.

\vskip 3 mm

Now, it remains to treat case C). To this aim, let us prove that 
\begin{equation}\label{traccia}
u^+_{|\Gamma_0}=u^-_{|\Gamma_0} =0.
\end{equation} 

\smallskip

For $k\geq 1$ and $\g<0$, by Remark \ref{reminterface}, Proposition \ref{propdonpiat2} and  \eqref{estsol}, we have,  up to a subsequence, \bk
\begin{equation}\label{gamma<0}
\begin{array}{c}
{\displaystyle \int_{\gae}(u^+_\e - u^-_\e)^2\, d\sigma}\\[5mm]
{\displaystyle = \int_{\omega}\left(u^+_\e\left(x',\ep g\left(\frac{x'}{\ep}\right)\right) - u^-_\e\left(x',\ep g\left(\frac{x'}{\ep}\right)\right)\right)^2}\times {\displaystyle\sqrt{1+\e^{2(k-1)}(\left|\n_{y'} g(y')\right|^2)_{|y'=x'/ \ep}}\, dx'}\\[5mm]
\ds \rightarrow 
\left\{\begin{array}{ll}
{\displaystyle \int_{\omega} \left(u^+(x',0) - u^-(x',0)\right)^2\, d x'} & \hbox{ if }k>1, \\[3mm]
{\displaystyle \mathcal{M}_{Y'}\left(\sqrt{1+(\left|\n_{y'} g(y')\right|^2)}\right) \int_{\omega}  \left(u^+(x',0) - u^-(x',0)\right)^2\, d x'} & \hbox{ if }k=1
		\end{array}\right.
		\end{array}
		\end{equation} 
and
\begin{equation}\label{gamma<0'}
{\displaystyle \int_{\gae}(u^+_\e - u^-_\e)^2\, d\sigma}\leq \e^{-\gamma}\ \rightarrow\\ 0.
	\end{equation} 
\smallskip
Thus, \eqref{traccia} holds by uniqueness.

For $0<k<1$ and $\g<1-k$, again by Remark \ref{reminterface}, Proposition \ref{propdonpiat2} and  \eqref{estsol}, one obtains, up to a subsequence, \bk
\begin{equation}\label{gamma<0*}
\begin{array}{c}
{\displaystyle  \ep^{1-k}\int_{\gae}(u^+_\e - u^-_\e)^2\, d\sigma}=\\[5mm]
{\displaystyle \int_{\omega}\left(u^+_\e\left(x',\ep g\left(\frac{x'}{\ep}\right)\right) - u^-_\e\left(x',\ep g\left(\frac{x'}{\ep}\right)\right)\right)^2}\times {\displaystyle\sqrt{\e^{2(1-k)}+(\left|\n_{y'} g(y')\right|^2)_{|y'=x'/ \ep}}\, dx'}\\[5mm]
\ds \rightarrow \mathcal{M}_{Y'}\left(\left|\n_{y'} g(y')\right|\right)\displaystyle \int_{\omega} \left(u^+(x',0) - u^-(x',0)\right)^2\, d x',
\end{array}
\end{equation}
and
\begin{equation}\label{gamma<0**}
{\displaystyle  \ep^{1-k}\int_{\gae}(u^+_\e - u^-_\e)^2\, d\sigma}\leq \e^{1-k-\g}\ \rightarrow\ 0.
\end{equation}
Hence, again we get \eqref{traccia} by uniqueness. 

Thus, we can choose a test function $\varphi\in H_0^1(Q)$  in \eqref{homovar}. As a consequence, the interface term disappears and one easily obtains the limit problem \eqref{prolimi3}.

\medskip

In conclusion, let us observe that since the limit problems have a unique solution,  the convergences in \eqref{convfin} hold for the whole sequence. \hfill $\Box$


\begin{thebibliography}{99}
		
		\bibitem{AmAnG} M. Amar, D. Andreucci, R. Bisegna, and R. Gianni, {\it Evolution and memory effects in the homogenization limit for electrical conduction in biological tissues}, Math. Models Methods Appl. Sci., Vol.14 (2004), 1261--1295.

\bibitem{AmAnT} M. Amar, D. Andreucci, and C. Timofte, {\it Heat conduction in composite media involving imperfect contact and perfectly conductive inclusions}, Math. Methods Appl. Sci., Vol.45 (2022), 11355--11379.

\bibitem{AmAnT2} M. Amar, D. Andreucci, and C. Timofte, {\it Asymptotic analysis for non-local problems in composites withdifferent imperfect contact conditions}, Appl. Anal., https://doi.org/10.1080/00036811.2022.2120867.

\bibitem{AMR} J. Avila, S. Monsurr\`{o}, and F. Raimondi, {\it Homogenization of an eigenvalue problem through rough surfaces}, submitted.

\bibitem{ben}
		A. Bensoussan, J.-L. Lions, and G. Papanicolaou, ``Asymptotic analysis for periodic structures", North-Holland Publishing Company, Netherlands, 1978.
		

		
		
		  \bibitem{CPT} G. Cardone, C. Perugia, and C. Timofte, {\it Homogenization results for a coupled system of reaction diffusion equations}, Nonlinear Anal., Vol.188 (2019), 236--264.
		  
		  \bibitem {CJ} H. S. Carslaw, and J. C. Jaeger, ``Conduction of Heat in Solids", At the Clarendon Press, Oxford, 1947.
	
		\bibitem{CeFrPiat} G. A. Chechkin, A. Friedman, and A. L. Piatnitski, {\it The boundary-value problem in domains with very rapidly oscillating boundary}, J. Math. Anal. Appl., Vol.231 (1999), 213--234.
		
		\bibitem{Conca2004} C. Conca, J. I. Diaz, A. Linan, and C. Timofte, {\it Homogenization in Chemical Reactive Flows},  Electron. J. Diff. Equ., Vol.40 (2004), 1--22.

	
		\bibitem{Conca2003} C. Conca, J. I. Diaz, and C. Timofte, {\it Effective Chemical Processes in Porous Media}, Math. Models Methods Appl. Sci., Vol.13 (2003), 1--26.
			
		
		\bibitem {DoGia} P. Donato, and D. Giachetti,{\it Existence and homogenization for a singular problem through rough surfaces}, SIAM J. Math. Anal., Vol.48 (2016), 4047--4086.
				
			
		 \bibitem{DonJOn} P. Donato, E.C. Jose, and D. Onofrei, {\it Asymptotic analysis of a multiscale parabolic problem with a rough fast oscillating interface}, Arch. Appl. Mech., Vol.89 (2019), 437--465.
		 
		\bibitem{donngu}
		P. Donato, and K.H. Le Nguyen, {\it Homogenization of diffusion problems with a nonlinear interfacial resistance}, Nonlinear Diff. Equ. Appl., Vol.22 (2015), 1345--1380.
			

		\bibitem{donpiat}
		P. Donato, and A. Piatnitski, {\it On the effective interfacial resistance through rough surfaces}, Comm. Pure and Appl. Math., Vol.9 (2010), 1295–1310. 
		
		\bibitem{FMPhomo} L. Faella, S. Monsurr\`o, and C. Perugia, {\it Homogenization of imperfect transmission problems: the case of weakly converging data}, Differ. Integral Equ., Vol.31 (2018), 595--620.
		
		\bibitem{FMP} L. Faella, S. Monsurr\'o, and C. Perugia, {\it Exact controllability for evolutionary imperfect transmission
problems}, J. Math. Pures Appl., Vol.122 (2019), 235--271.
		
		\bibitem {Gahn}M. Gahn, M. Neuss-Radu, and P. Knabner, {\it Homogenization of reaction-diffusion processes in a two-component porous medium
with  nonlinear flux  conditions at the interface}, SIAM J. Appl. Math., Vol.76 (2016), 1819--1843.

\bibitem {Gahn2} M. Gahn, M. Neuss-Radu, and P. Knabner, {\it Derivation of an effective model for metabolic processes in living cells
including substrate channeling}, Vietnam J. Math., Vol.45 (2017), 265--293.


			\bibitem {Jager} U. Hornung, W. J\"{a}ger, and A. Mikeli\'{c}, {\it Reactive transport through an array of cells with semi-permeable membranes}, ESAIM Math. Model. Numer. Anal., Vol.28 (1994), 59--94.
		
		
		\bibitem{Pop1} K. Kumar, M. Neuss-Radu, and I.S. Pop, {\it Homogenization of pore scale model for precipitation and dissolution in porous media}, IMA J. Appl. Math., Vol.81(2016),  877--897.

\bibitem{MP1} S. Monsurr\`{o}, and C. Perugia, {\it Homogenization and exact controllability for problems with imperfect interface}, Netw. Heterog. Media, Vol.14 (2019), 411--444.

\bibitem {Muntean1} A. Muntean, and M. Neuss-Radu, {\it A multiscale Galerkin approach for a class of nonlinear coupled reaction-diffusion systems in complex media}, J. Math. Anal. Appl., Vol.371 (2010),  705--718.

		\bibitem {Malte} M. A. Peter, and M. B\"{o}hm, {\it Different choices of scaling in homogenization of diffusion and interfacial exchange in a porous medium}, Math. Meth. Appl. Sci., Vol.31 (2008), 1257--1282.

\bibitem{Neu-Jag} M. Radu-Neuss, and W. J\"{a}ger, {\it Effective transmission conditions for reactions-diffusion processes in domains separated by an interface}, SIAM J. Math. Anal., Vol.39 (2007), 687--720.

\bibitem {Rai} F. Raimondi, {\it  Homogenization of a class of singular elliptic problems in two-component domains}, Asymptot. Anal., Vol.132 (2023), 1–27.


\end{thebibliography}
\end{document}